\newtheorem{thm}{Theorem}[section]
\newtheorem{lem}[thm]{Lemma}
\newtheorem{cor}[thm]{Corollary}
\newtheorem{pro}[thm]{Proposition}
\newtheorem{ex}[thm]{Example}
\newtheorem{rmk}[thm]{Remark}
\newtheorem{defi}[thm]{Definition}
\newcommand {\emptycomment}[1]{}
\newcommand{\lon }{\,\rightarrow\,}
\newcommand{\be }{\begin{equation}}
\newcommand{\ee }{\end{equation}}
\newcommand{\g}{\mathfrak g}
\newcommand{\h}{\mathfrak h}
\newcommand{\huaB}{\mathcal{B}}%{{\mathcal{E}}}%{\mathcal{B}}
\newcommand{\huaL}{\mathcal{L}}
\newcommand{\huaP}{\mathcal{P}}
\newcommand{\huaH}{\mathcal{H}}
\newcommand{\huaZ}{\mathcal{Z}}
\newcommand{\frkC}{\mathfrak C}
\newcommand{\dM}{\mathrm{d}}
\newcommand{\Id}{{\rm{Id}}}
\newcommand{\br}[1]{   [ \cdot,    \cdot  ]   }
\newcommand{\Hom}{\mathrm{Hom}}
\newcommand{\Der}{\mathrm{Der}}
\newcommand{\Ob}{\mathsf{Ob_{T}}}
\newcommand{\gl}{\mathfrak {gl}}
\newcommand{\ad}{\mathrm{ad}}
\newcommand{\V}{\mathbb{V}}
\newcommand{\Le}{\mathsf{3Leib}}
\begin{document}

\title[Deformations and cohomologies of embedding tensors on 3-Lie algebras]{Deformations and cohomologies of embedding tensors on 3-Lie algebras}

\author{Meiyan Hu}
\address{Department of Mathematics, Jilin University, Changchun 130012, Jilin, China}
\email{hmy21@mails.jlu.edu.cn}

\author{Shuai Hou}
\address{Department of Mathematics, Jilin University, Changchun 130012, Jilin, China}
\email{houshuai19@mails.jlu.edu.cn}

\author{Lina Song}
\address{Department of Mathematics, Jilin University, Changchun 130012, Jilin, China}
\email{songln@jlu.edu.cn}

\author{Yanqiu Zhou}
\address{School of Science, Guangxi University of Science and Technology, Liuzhou 545006, China}
\email{zhouyanqiunihao@163.com}

%\date{\today}

\begin{abstract}
In this paper, first we introduce the notion of an embedding tensor on a $3$-Lie algebra, which naturally induces a $3$-Leibniz algebra.  Using the derived bracket, we construct a Lie $3$-algebra, whose Maurer-Cartan elements are
embedding tensors. Consequently, we obtain the  $L_{\infty}$-algebra that governs deformations of embedding tensors. We define the cohomology theory for embedding tensors on $3$-Lie algebras. As applications,
we show that if two formal deformations of an embedding tensor on a $3$-Lie algebra are equivalent, then
their infinitesimals are in the same cohomology class in the second cohomology group. Moreover, an order $n$ deformation of an embedding tensor  is extendable if and only if the obstruction class, which is in the third cohomology group, is trivial.
\end{abstract}

\renewcommand{\thefootnote}{}
\footnotetext{2020 Mathematics Subject Classification. 17A42, 17B56, 17B38}

\keywords{embedding tensor, $3$-Lie algebra,  representation, cohomology, deformation}

\maketitle

\tableofcontents

\allowdisplaybreaks

%\end{document}

\section{Introduction}
The notion of embedding tensors  have been widely studied and applied in the fields of mathematics and physics.
The emergence of embedding tensors can be traced back to the study of the gauged supergravity theory \cite{NS}. In \cite{Bergshoeff}, the authors used embedding tensors to studied the N =8 supersymmetric gauge theories and Bagger-Lambert theory of multiple M2-branes.
Kotov and Strobl used embedding tensors to construct tensor hierarchies, which shows us the possible mathematical properties of embedding tensors from the physics point of view \cite{KS}.
%Kelly and Miller et al. studied the average operators on Banach algebras by using the analysis theory \cite{Kelly,Miller,Rota}.%
In mathematics, embedding tensors are called averaging operators.  Averaging operators on various types of algebras such as associative algebras and Lie algebras
were studied in \cite{Aguiar}. In particular, an averaging operator on a Lie algebra can give rise to a Leibniz
algebra structure. The controlling algebras and   cohomology theories of
embedding tensors were given in   \cite{Sheng}.
In addition, averaging operators are closely related to the operad theory
\cite{PBGN,PL} and  double algebras theory. See \cite{zhangguo} for more details.

$3$-Lie algebras and more generally, $n$-Lie
algebras were introduced by Filippov in \cite{Filippov1}, which can be regarded as generalizations of
Lie algebras to higher arities.
Nambu proved that the $n$-Lie algebra is the algebraic structure corresponding to Nambu mechanics \cite{Nambu}.   $3$-Lie algebras have various applications in mathematical physics, e.g. metric $3$-Lie algebras can be used to construct the basic model of Bagger-Lambert-Gustavsson theory \cite{dMFM}, and a special class of $3$-Lie algebras constructed from the octonions can be used to study the Chern-Simons-matter theory and gives a possible candidate for the theory on multiple M2-branes with gauge symmetry \cite{Masahito}. The deformation problem of $n$-Lie algebras and $3$-Lie algebras were studied respectively in \cite{deformation,Makhlouf}. See the review \cite{review} for more details.
Recently, the classical Yang-Baxter equation, (relative) Rota-Baxter operators, product structures and complex structures on 3-Lie algebras
were studied in \cite{BGL,BGLW,ShengTang,TR1}.
Casas, Loday and Pirashvili introduced the $n$-ary version of Leibniz algebras \cite{Casas}, which is called   $n$-Leibniz algebras, whose skew-symmetric counterparts are $n$-Lie algebras.
In \cite{Rotkiewicz}, the author introduced a graded Lie brackets on the space of cochains of $n$-Leibniz algebras and describe an $n$-Leibniz  algebra  structure as a canonical structure.

Due to the importance of embedding tensors on Lie algebras and $3$-Lie algebras, the purpose of this paper is to introduce the concept of embedding tensors on  $3$-Lie algebras, and study the corresponding deformation and cohomology theory.  A linear map $T:V\rightarrow\g$ is called an {\bf embedding tensor} on a $3$-Lie algebra $(\g,[\cdot,\cdot,\cdot]_\g)$ with respect to a representation $(V;\rho)$ if $T$ satisfies
$$
 [Tu,Tv,Tw]_{\g}=T\Big(\rho(Tu,Tv)w\Big), \quad \forall u, v, w \in V.
$$
We show that an embedding tensor $T:V \rightarrow\g$ on a $3$-Lie algebra $\g$ with
respect to a representation on $V$ naturally induces a $3$-Leibniz algebra structure on
$V$ with a representation on $\g$. The corresponding cohomology of the $3$-Leibniz algebra $V$ with coefficients in $\g$ is taken to be the
cohomology of the embedding tensor $T$. Moreover, we use Voronov's higher derived brackets to
construct a Lie $3$-algebra whose Maurer-Cartan elements are embedding tensors on 3-Lie algebras. We also obtain the twisted $L_{\infty}$-algebra that governs deformations of an embedding tensor.
Finally, we use the cohomology theory of embedding tensors to study formal deformations and the extendability of an order $n$ deformation to an order $n+1$ deformation of embedding tensors on $3$-Lie algebras. Since both embedding tensors and $3$-Lie algebras have fruitful applications in mathematical physics, embedding tensors on $3$-Lie algebras have also possible applications in mathematical physics, which will be studied in the future.

 The paper is organized as follows. In Section \ref{sec:rc}, we introduce the notion of embedding tensors on $3$-Lie algebras, which naturally induce 3-Leibniz algebra structures. We show that embedding tensors can be characterized by graphs
of the hemisemidirect product $3$-Leibniz algebra.  In Section \ref{sec:ET}, we
construct an $L_{\infty}$-algebra whose Maurer-Cartan elements are embedding tensors on $3$-Lie algebras. We also construct the $L_{\infty}$-algebra that governs  deformations of embedding tensors. In Section \ref{cohomology}, we
introduce a cohomology theory of embedding tensors on 3-Lie algebras. In Section \ref{defor}, we study formal
deformations  and extendability of  order $n$ deformations of an embedding tensor  using the established cohomology theory.

\vspace{2mm}

In this paper, we work over an algebraically closed filed $\mathbb K$ of characteristic $0$.% and all the vector spaces are over $\mathbb K$ and finite-dimensional.

\vspace{2mm}

{\bf Acknowledgements. } This research is supported by NSFC (12001226).

\section{Embedding tensors on $3$-Lie algebras}\label{sec:rc}
In this section,  we introduce the notion of embedding tensors on $3$-Lie algebras. We show that a linear map $T:V\rightarrow\g$ is an embedding tensor if and only if the graph of $T$ is a subalgebra of the hemisemidirect product $3$-Leibniz algebra $\g\ltimes_{\rho} V$. Consequently, an embedding tensor $T$ induces a $3$-Leibniz algebra structure on $V$. Finally, we provide some examples of embedding tensors on $3$-Lie algebras.
\begin{defi}\label{def:=3-lie}\cite{Filippov1}
A {\bf $3$-Lie algebra} is a vector space $\g$ together with a skew-symmetric linear map $[\cdot,\cdot,\cdot]_{\g}:\otimes^{3}\g\rightarrow \g$ such that the following {\bf Fundamental Identity} holds:
\begin{equation}\label{eq:=3-lie}
[x_{1},x_{2},[x_{3},x_{4},x_{5}]_{\g}]_{\g}-[[x_{1},x_{2},x_{3}]_{\g},x_{4},x_{5}]_{\g}-[x_{3},[x_{1},x_{2},x_{4}]_{\g},x_{5}]_{\g}-[x_{3},x_{4},[x_{1},x_{2},x_{5}]_{\g}]_{\g}=0,
\end{equation}
for all $x_i \in \g,1\leq i\leq 5.$
\end{defi}
\begin{defi}\label{def:=3-lie-representation}\cite{Kasymov1}
A {\bf representation} of a $3$-Lie algebra $(\g,[\cdot,\cdot,\cdot]_{\g})$ on a vector space $V$ is a linear map $\rho:\wedge^{2}\g \rightarrow \gl(V)$, such that for all $x_{1}, x_{2}, x_{3}, x_{4}\in \g,$ the following equalities hold:
\begin{eqnarray}
\label{rep1:=3-lie}\rho(x_{1},x_{2})\rho(x_{3},x_{4})&=&\rho([x_{1},x_{2},x_{3}]_{\g},x_{4})+\rho(x_{3},[x_{1},x_{2},x_{4}]_{\g})+\rho(x_{3},x_{4})\rho(x_{1},x_{2}),\\
\label{rep2:=3-lie}\rho(x_{1},[x_{2},x_{3},x_{4}]_{\g})&=&\rho(x_{3},x_{4})\rho(x_{1},x_{2})-\rho(x_{2},x_{4})\rho(x_{1},x_{3})+\rho(x_{2},x_{3})\rho(x_{1},x_{4}).
\end{eqnarray}
\end{defi}

Let $(\g,[\cdot,\cdot,\cdot]_{\g})$ be a $3$-Lie algebra. The linear map $\ad: \wedge^2\g\rightarrow \gl(\g)$ defined by
\begin{equation}\label{eq:=ad-rep}
\ad_{x,y}z:=[x,y,z]_{\g},\quad \forall x,y,z \in \g,
\end{equation}
is a representation of $(\g,[\cdot,\cdot,\cdot]_{\g})$ on itself, which is called the {\bf adjoint representation} of $\g$.

\begin{defi}\cite{Casas}
A {\bf $3$-Leibniz algebra} is a vector space $\mathcal{L}$ equipped with a linear map $[\cdot,\cdot,\cdot]_{\mathcal{L}}:\mathcal{L}\otimes\mathcal{L}\otimes\mathcal{L}\rightarrow \mathcal{L}$
 such that
 \begin{equation}\label{3-leib1}
{[x_{1},x_{2},[y_{1},y_{2},y_{3}]_{\mathcal{L}}]}_{\mathcal{L}}=[[x_{1},x_{2},y_{1}]_{\mathcal{L}},y_{2},y_{3}]_{\mathcal{L}}+[y_{1},[x_{1},x_{2},y_{2}]_{\mathcal{L}},y_{3}]_{\mathcal{L}}+[y_{1},y_{2},[x_{1},x_{2},y_{3}]_{\mathcal{L}}]_{\mathcal{L}}, \end{equation}
for all $ x_{1},x_{2},y_{1},y_{2},y_{3} \in \mathcal{L}.$

\end{defi}

Let $(\g,[\cdot,\cdot,\cdot]_{\g})$ be a $3$-Lie algebra and $(V;\rho)$ be a representation of $\g.$ On the direct sum vector space $\g\oplus V$, define a trilinear bracket operation $[\cdot,\cdot,\cdot]_{\rho}$ by
\begin{eqnarray}
  [x+u,y+v,z+w]_{\rho}=[x,y,z]_{\g}+\rho(x,y)w,\quad \forall x,y,z\in \g,u,v,w\in V.
\end{eqnarray}
\begin{pro}\label{hemi-direct}
  With the above notations, $(\g\oplus V,[\cdot,\cdot,\cdot]_{\rho})$ is a $3$-Leibniz algebra, which is called the {\bf hemisemidirect product $3$-Leibniz algebra}, and denoted by $\g\ltimes_{\rho} V$.
\end{pro}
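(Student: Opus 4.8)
The plan is to verify the defining identity \eqref{3-leib1} for the bracket $[\cdot,\cdot,\cdot]_\rho$ on $\g\oplus V$ by a direct computation, splitting the verification according to where the $V$-component sits among the five arguments. First I would observe that, by the very form of the bracket, $[x+u,y+v,z+w]_\rho$ depends on $u$ and $v$ not at all, and depends on $w$ only linearly through $\rho(x,y)w$. Hence, to prove that
$$
[a_1,a_2,[b_1,b_2,b_3]_\rho]_\rho=[[a_1,a_2,b_1]_\rho,b_2,b_3]_\rho+[b_1,[a_1,a_2,b_2]_\rho,b_3]_\rho+[b_1,b_2,[a_1,a_2,b_3]_\rho]_\rho
$$
for all $a_i,b_j\in\g\oplus V$, it suffices by multilinearity to treat the cases where each of $a_1,a_2,b_1,b_2,b_3$ is a pure element of $\g$ or a pure element of $V$. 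Writing $a_i=x_i+u_i$ and $b_j=y_j+v_j$, the identity only sees $x_1,x_2,y_1,y_2$ and one copy of $v_3$ (or $y_3$), so only a few cases are genuinely distinct.

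The key steps, carried out in order, are as follows. Case (i): all five arguments lie in $\g$. Then both sides of \eqref{3-leib1} reduce exactly to the Fundamental Identity \eqref{eq:=3-lie} for $[\cdot,\cdot,\cdot]_\g$, which holds since $\g$ is a $3$-Lie algebra; note in particular that the skew-symmetry of $[\cdot,\cdot,\cdot]_\g$ converts the cyclic-looking middle terms of \eqref{3-leib1} into the signed terms of \eqref{eq:=3-lie}. Case (ii): the last argument $b_3=v_3\in V$ and $a_1,a_2,b_1,b_2\in\g$. The left-hand side becomes $\rho(x_1,x_2)\rho(y_1,y_2)v_3$, while the right-hand side becomes $\rho([x_1,x_2,y_1]_\g,y_2)v_3+\rho(y_1,[x_1,x_2,y_2]_\g)v_3+\rho(y_1,y_2)\rho(x_1,x_2)v_3$; these agree precisely by the representation axiom \eqref{rep1:=3-lie}. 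Case (iii): one of $a_1,a_2,b_1,b_2$ lies in $V$ while the remaining arguments, including possibly $b_3$, are arbitrary. Here one checks that whenever a $V$-element occupies any of the first two slots of an outer bracket (the slots that feed into $[\cdot,\cdot,\cdot]_\g$ or into the first argument of $\rho$), every term in \eqref{3-leib1} involving it vanishes identically, so the identity degenerates to $0=0$; and if a $V$-element sits only in a position that is already killed (the $u_i,v_1,v_2$ slots), both sides are unaffected. One also records the mixed subcase where $b_3\in\g$ but some inner argument is in $V$, which again makes both sides vanish. Collecting cases (i)–(iii) establishes \eqref{3-leib1} in full.

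I do not expect any serious obstacle: the proposition is a routine but instructive bookkeeping exercise, and the only point requiring mild care is the matching of signs in Case (i) (using skew-symmetry of the $3$-Lie bracket to pass between \eqref{3-leib1} and \eqref{eq:=3-lie}) and the correct invocation of \eqref{rep1:=3-lie} in Case (ii)—notably, it is the first representation axiom, not the second, that is needed, since the $V$-slot is forced to be the third argument of the bracket. It is also worth remarking explicitly that the name \emph{hemisemidirect} is justified by the asymmetry observed at the outset: unlike the semidirect product, $V$ acts only through $\rho(x,y)$ on the third slot, so $(\g\oplus V,[\cdot,\cdot,\cdot]_\rho)$ is generally a $3$-Leibniz algebra that is not a $3$-Lie algebra, which is exactly what makes it the right ambient object for characterizing embedding tensors in the sequel.
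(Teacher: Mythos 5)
Your proof is correct and follows essentially the same route as the paper's: a direct verification in which the $\g$-component reduces to the Fundamental Identity \eqref{eq:=3-lie} and the $V$-component to the first representation axiom \eqref{rep1:=3-lie}; your case analysis by multilinearity is just a more explicit organization of the paper's single computation with general mixed elements $x_i+u_i$. One small remark: no skew-symmetry is actually needed in your Case (i), since \eqref{3-leib1} and \eqref{eq:=3-lie} already agree term by term once the subtracted terms of the Fundamental Identity are moved to the right-hand side.
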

\begin{proof}
For all $x_i \in \g, u_i \in V, 1 \leq i\leq 5,$ by \eqref{eq:=3-lie}-\eqref{rep1:=3-lie}, we have
\begin{eqnarray*}
&&[x_1+u_1,x_2+u_2,[x_3+u_3,x_4+u_4,x_5+u_5]_{\rho}]_{\rho}-[[x_1+u_1,x_2+u_2,x_3+u_3]_{\rho},x_4+u_4,x_5+u_5]_{\rho}\\
&&-[x_3+u_3,[x_1+u_1,x_2+u_2,x_4+u_4]_{\rho},x_5+u_5]_{\rho}\\
&&-[x_3+u_3,x_4+u_4,[x_1+u_1,x_2+u_2,x_5+u_5]_{\rho}]_{\rho}\\
&=&[x_1,x_2,[x_3,x_4,x_5]_{\g}]_{\g}+\rho(x_1,x_2)\rho(x_3,x_4)u_5-[[x_1,x_2,x_3]_{\g},x_4,x_5]_{\g}-\rho([x_1,x_2,x_3]_{\g},x_4)u_5\\
&&-[x_3,[x_1,x_2,x_4]_{\g},x_5]_{\g}-\rho(x_3,[x_1,x_2,x_4]_{\g})u_5-[x_3,x_4,[x_1,x_2,x_5]_{\g}]_{\g}-\rho(x_3,x_4)\rho(x_1,x_2)u_5\\
&=&0.
\end{eqnarray*}
Therefore, $(\g\oplus V,[\cdot,\cdot,\cdot]_{\rho})$ is a $3$-Leibniz algebra.
\end{proof}

\begin{defi}
Let $(V;\rho)$ be a representation of a $3$-Lie algebra $(\g,[\cdot,\cdot,\cdot]_\g)$. A linear map $T:V\rightarrow\g$ is called an {\bf embedding tensor} on the $3$-Lie algebra $(\g,[\cdot,\cdot,\cdot]_\g)$ with respect to the representation $(V;\rho)$ if $T$ satisfies
\begin{equation}\label{ET1}
 [Tu,Tv,Tw]_{\g}=T\Big(\rho(Tu,Tv)w\Big), \quad \forall u, v, w \in V.
\end{equation}
\end{defi}

The identity \eqref{ET1} can be characterized by the graph of $T$ being a subalgebra.
\begin{thm}\label{semi-direct}
A linear map $T:V\rightarrow\g$ is an embedding tensor on a $3$-Lie algebra $\g$ with respect to the representation $(V;\rho)$ if and only if the graph $Gr(T)=\{Tu+u|u\in V\}$ is a $3$-Leibniz subalgebra of the hemisemidirect product  $3$-Leibniz algebra $\g\ltimes_{\rho} V.$
\end{thm}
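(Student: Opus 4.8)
The plan is to verify, by a direct computation with the hemisemidirect product bracket, that closure of $Gr(T)$ under $[\cdot,\cdot,\cdot]_\rho$ is literally the same condition as the defining identity \eqref{ET1}. First I would record the elementary observation that $Gr(T)=\{Tu+u\mid u\in V\}$ is the image of the injective linear map $V\to\g\oplus V$, $u\mapsto Tu+u$, hence a linear subspace of $\g\ltimes_\rho V$; and that an element $x+p\in\g\oplus V$ lies in $Gr(T)$ if and only if $x=Tp$. Consequently, to decide whether $Gr(T)$ is a $3$-Leibniz subalgebra it suffices to decide whether it is closed under the trilinear bracket: the Fundamental Identity \eqref{3-leib1} then holds in $Gr(T)$ automatically, being inherited from the ambient $3$-Leibniz algebra $\g\ltimes_\rho V$ of Proposition \ref{hemi-direct}.

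Next I would compute, for arbitrary $u,v,w\in V$, using the formula for $[\cdot,\cdot,\cdot]_\rho$,
\[
[Tu+u,\,Tv+v,\,Tw+w]_{\rho}=[Tu,Tv,Tw]_{\g}+\rho(Tu,Tv)w,
\]
which is an element of $\g\oplus V$ with $\g$-component $[Tu,Tv,Tw]_{\g}$ and $V$-component $\rho(Tu,Tv)w$. By the characterization of $Gr(T)$ in the previous paragraph, this element lies in $Gr(T)$ if and only if
\[
[Tu,Tv,Tw]_{\g}=T\big(\rho(Tu,Tv)w\big).
\]
Therefore $Gr(T)$ is closed under $[\cdot,\cdot,\cdot]_{\rho}$ if and only if the above holds for all $u,v,w\in V$, which is exactly condition \eqref{ET1} defining an embedding tensor.

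Combining the two paragraphs gives both implications simultaneously: $T$ is an embedding tensor $\iff$ \eqref{ET1} holds $\iff$ $Gr(T)$ is closed under the bracket of $\g\ltimes_\rho V$ $\iff$ $Gr(T)$ is a $3$-Leibniz subalgebra of $\g\ltimes_\rho V$. I do not anticipate any genuine obstacle; the statement is essentially a reformulation, and the only point deserving a word of care is the remark that closure under the ternary bracket already implies the subalgebra condition, since the Fundamental Identity is an identity valid on all elements of $\g\ltimes_\rho V$ and hence on those of $Gr(T)$.
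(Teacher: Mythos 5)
Your proposal is correct and follows essentially the same route as the paper: compute $[Tu+u,Tv+v,Tw+w]_{\rho}=[Tu,Tv,Tw]_{\g}+\rho(Tu,Tv)w$ and observe that this lies in $Gr(T)$ exactly when \eqref{ET1} holds. The only difference is that you spell out explicitly why closure under the bracket suffices (the Fundamental Identity being inherited from the ambient algebra), a point the paper leaves implicit.
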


\begin{proof}
Let $T$ be a linear map. For all $u,v,w \in V$, we have
\begin{eqnarray*}
[Tu+u,Tv+v,Tw+w]_{\rho}=[Tu, Tv, Tw]_{\g}+\rho(Tu,Tv)w.
\end{eqnarray*}
Therefore, the graph $Gr(T)=\{Tu+u|u\in V\}$ is a subalgebra of the hemisemidirect product  $3$-Leibniz algebra $\g\ltimes_{\rho} V$ if and only if $T$ satisfies
$[Tu,Tv,Tw]_{\g}=T\big(\rho(Tu,Tv)w\big)$, which implies that $T$ is an embedding tensor on a $3$-Lie algebra $\g$ with respect to the representation $(V;\rho)$.
\end{proof}

The algebraic structure underlying an embedding tensor on the $3$-Lie algebra $(\g,[\cdot,\cdot,\cdot]_\g)$ is a $3$-Leibniz algebra. We have the following proposition.
\begin{pro}\label{ET}
Let $T:V\rightarrow\g$ be an embedding tensor on the $3$-Lie algebra  $(\g,[\cdot,\cdot,\cdot]_\g)$  with respect to the representation $(V;\rho)$. Define a linear map $[\cdot,\cdot,\cdot]_{T}:\otimes^{3}V\rightarrow V$ by
\begin{equation}\label{ET2}
[u,v,w]_{T}=\rho(Tu,Tv)w,\quad \forall u,v,w \in V.
\end{equation}
Then $(V,[\cdot,\cdot,\cdot]_{T})$ is a $3$-Leibniz algebra. Moreover, $T$ is a homomorphism from the $3$-Leibniz algebra $(V,[\cdot,\cdot,\cdot]_{T})$ to the $3$-Lie algebra $(\g,[\cdot,\cdot,\cdot]_{\g})$.
\end{pro}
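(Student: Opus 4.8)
The plan is to verify directly that the bracket $[\cdot,\cdot,\cdot]_T$ defined in \eqref{ET2} satisfies the $3$-Leibniz identity \eqref{3-leib1}, and then to check the homomorphism property separately. Rather than expanding everything by brute force, I would organize the computation around the two defining axioms \eqref{rep1:=3-lie} and \eqref{rep2:=3-lie} of a representation together with the embedding tensor condition \eqref{ET1}. The key observation is that $[x,y,[u,v,w]_T]_T = \rho(Tx,Ty)\rho(Tu,Tv)w$, while the condition \eqref{ET1} lets us rewrite $T[u,v,w]_T = T\rho(Tu,Tv)w = [Tu,Tv,Tw]_\g$, so that applying $\rho(Tx,Ty)$ to the inner slot of a nested bracket produces exactly the operator $\rho(Tx,Ty)\rho([Tu,Tv,Tw]_\g, \text{--})$-type terms that appear when one unwinds \eqref{rep1:=3-lie} with the arguments $(Tx,Ty,Tu,Tv)$.

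Concretely, the first step is to compute the left-hand side of \eqref{3-leib1} for $\mathcal L = V$:
$$
[x_1,x_2,[y_1,y_2,y_3]_T]_T = \rho(Tx_1,Tx_2)\rho(Ty_1,Ty_2)y_3.
$$
The second step is to compute the three terms on the right-hand side, using \eqref{ET1} to convert $T$ of a $T$-bracket into a $\g$-bracket wherever it appears inside a $\rho$:
$$
[[x_1,x_2,y_1]_T,y_2,y_3]_T = \rho\big(T\rho(Tx_1,Tx_2)y_1,\, Ty_2\big)y_3 = \rho\big([Tx_1,Tx_2,Ty_1]_\g,\, Ty_2\big)y_3,
$$
and similarly $[y_1,[x_1,x_2,y_2]_T,y_3]_T = \rho\big(Ty_1,\,[Tx_1,Tx_2,Ty_2]_\g\big)y_3$ and $[y_1,y_2,[x_1,x_2,y_3]_T]_T = \rho(Ty_1,Ty_2)\rho(Tx_1,Tx_2)y_3$. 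The third step is to subtract: the identity \eqref{3-leib1} for $[\cdot,\cdot,\cdot]_T$ is then exactly \eqref{rep1:=3-lie} evaluated on $(Tx_1,Tx_2,Ty_1,Ty_2)$ acting on $y_3$, hence holds. Finally, the homomorphism claim is immediate from \eqref{ET1}: $T[u,v,w]_T = T\rho(Tu,Tv)w = [Tu,Tv,Tw]_\g$.

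I do not expect a genuine obstacle here; the content is bookkeeping. The one place requiring a little care is making sure that the $3$-Leibniz axiom is being matched against \eqref{rep1:=3-lie} and not \eqref{rep2:=3-lie} — that is, checking that the particular permutation and sign pattern of the four terms in \eqref{3-leib1} lines up slot-for-slot with the four terms of \eqref{rep1:=3-lie} after the substitution $x_i \mapsto Tx_i$, $y_j \mapsto Ty_j$. Note in particular that \eqref{ET1} is used in an essential way: without it, $T\rho(Tx_1,Tx_2)y_1$ could not be replaced by $[Tx_1,Tx_2,Ty_1]_\g$, and the right-hand terms would not assemble into an instance of \eqref{rep1:=3-lie}. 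Axiom \eqref{rep2:=3-lie} is not needed for this proposition, which is consistent with the fact that a hemisemidirect product already only used \eqref{rep1:=3-lie} in Proposition \ref{hemi-direct}; indeed, an alternative and perhaps cleaner route is to deduce the proposition from Theorem \ref{semi-direct}, since the restriction of the $3$-Leibniz bracket $[\cdot,\cdot,\cdot]_\rho$ to the subalgebra $Gr(T)$ transports, via the linear isomorphism $u \mapsto Tu + u$, to precisely the bracket $[\cdot,\cdot,\cdot]_T$ on $V$.
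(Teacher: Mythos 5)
Your proposal is correct and follows essentially the same route as the paper: both reduce the $3$-Leibniz identity for $[\cdot,\cdot,\cdot]_{T}$ to the representation axiom \eqref{rep1:=3-lie} evaluated on $(Tx_{1},Tx_{2},Ty_{1},Ty_{2})$ after using \eqref{ET1} to replace $T\rho(Tx_{1},Tx_{2})y_{i}$ by $[Tx_{1},Tx_{2},Ty_{i}]_{\g}$, and both obtain the homomorphism claim directly from \eqref{ET1}. Your remark that the statement could alternatively be deduced by transporting the bracket on $Gr(T)$ from Theorem \ref{semi-direct} is a valid observation but is not the path the paper takes.
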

\begin{proof}
For any $u_{1},u_{2},u_{3},u_{4},u_{5}$, by \eqref{rep1:=3-lie}, \eqref{ET1} and \eqref{ET2}, we have
\begin{eqnarray*}
&&[[u_{1},u_{2},u_{3}]_{T},u_{4},u_{5}]_{T}+[u_{3},[u_{1},u_{2},u_{4}]_{T},u_{5}]_{T}+[u_{3},u_{4},[u_{1},u_{2},u_{5}]_{T}]_{T}\\
&=&[\rho(Tu_{1},Tu_{2})u_{3},u_{4},u_{5}]_{T}+[u_{3},\rho(Tu_{1},Tu_{2})u_{4},u_{5}]_{T}+[u_{3},u_{4},\rho(Tu_{1},Tu_{2})u_{5}]_{T}\\
&=&\rho(T(\rho(Tu_{1},Tu_{2})u_{3}),Tu_{4})u_{5}+\rho(Tu_{3},T(\rho(Tu_{1},Tu_{2})u_{4}))u_{5}+\rho(Tu_{3},Tu_{4})\rho(Tu_{1},Tu_{2})u_{5}\\
&=&\rho\big([Tu_{1},Tu_{2},T u_{3}]_{\g},Tu_{4}\big)u_{5}+\rho\big(Tu_{3},[Tu_{1},Tu_{2},T u_{4}]_{\g}\big)u_{5}+\rho\big(Tu_{3},Tu_{4})\rho(Tu_{1},Tu_{2}\big)u_{5}\\
&=&\rho(Tu_{1},Tu_{2})\rho(Tu_{3},Tu_{4})u_{5}\\
&=&[u_{1},u_{2},\rho(Tu_{3},Tu_{4})u_{5}]_{T}\\
&=&[u_{1},u_{2},[u_{3},u_{4},u_{5}]_{T}]_{T}.
\end{eqnarray*}
Therefore, $(V,[\cdot,\cdot,\cdot]_{T})$ is a $3$-Leibniz algebra. By \eqref{ET1}, $T$ is a homomorphism from the $3$-Leibniz algebra $(V,[\cdot,\cdot,\cdot]_{T})$ to the $3$-Lie algebra $(\g,[\cdot,\cdot,\cdot]_{\g})$.
\end{proof}

\begin{defi}\label{homoET}
Let $T$ and $T^{\prime}$ be two embedding tensors on the $3$-Lie algebra $(\g,[\cdot,\cdot,\cdot]_{\g})$ with respect to the representation $(V;\rho)$. A {\bf homomorphism} from
$T^{\prime}$ to $T$ consists of a $3$-Lie algebra homomorphism $\phi_{\g}:\g\rightarrow \g$ and a linear map $\phi_{V}:V\rightarrow V$ such that
\begin{eqnarray}
\label{homomorphism-1-3Lei}T\circ \phi_{V}&=&\phi_{\g}\circ T^{\prime},\\
\label{homomorphism-2-3Lei}\phi_{V}(\rho(x,y)u)&=&\rho(\phi_{\g}(x),\phi_{\g}(y))(\phi_{V}(u)), \quad \forall x,y \in \g, u\in V.
\end{eqnarray}
In particular, if both $\phi_{\g}$ and $\phi_{V}$ are invertible, $(\phi_{\g},\phi_{V})$ is called an {\bf isomorphism} from $T'$ to $T$.
\end{defi}

\begin{pro}
Let $T$ and $T^{\prime}$ be two embedding tensors on the $3$-Lie algebra $(\g,[\cdot,\cdot,\cdot]_{\g})$ with respect to the representation $(V;\rho)$ and $(\phi_{\g},\phi_{V})$ be a homomorphism from $T^{\prime}$ to $T$. Then $\phi_{V}$ is a homomorphism of $3$-Leibniz algebras from $(V,[\cdot,\cdot,\cdot]_{T^{\prime}})$ to $(V,[\cdot,\cdot,\cdot]_{T})$.
\end{pro}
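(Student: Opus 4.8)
The plan is to verify the defining identity for a $3$-Leibniz algebra homomorphism directly, namely that $\phi_{V}$ intertwines $[\cdot,\cdot,\cdot]_{T'}$ and $[\cdot,\cdot,\cdot]_{T}$. Recall from Proposition \ref{ET} that both brackets are given in terms of the representation: $[u,v,w]_{T'}=\rho(T'u,T'v)w$ and $[u,v,w]_{T}=\rho(Tu,Tv)w$ for all $u,v,w\in V$. So the goal reduces to showing that $\phi_{V}\big(\rho(T'u,T'v)w\big)=\rho(Tu,Tv)\phi_{V}(w)$.

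The computation proceeds in two short moves. First I would apply the compatibility condition \eqref{homomorphism-2-3Lei}, with $x=T'u$ and $y=T'v$, to rewrite
\[
\phi_{V}\big(\rho(T'u,T'v)w\big)=\rho\big(\phi_{\g}(T'u),\phi_{\g}(T'v)\big)\phi_{V}(w).
\]
Then I would apply the intertwining condition \eqref{homomorphism-1-3Lei}, which says $\phi_{\g}\circ T'=T\circ\phi_{V}$, to both slots of $\rho$, giving $\phi_{\g}(T'u)=T(\phi_{V}(u))$ and likewise for $v$. Substituting these yields
\[
\phi_{V}\big([u,v,w]_{T'}\big)=\rho\big(T\phi_{V}(u),T\phi_{V}(v)\big)\phi_{V}(w)=[\phi_{V}(u),\phi_{V}(v),\phi_{V}(w)]_{T},
\]
which is exactly the statement that $\phi_{V}$ is a homomorphism of $3$-Leibniz algebras. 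No use of the embedding tensor equation \eqref{ET1} itself is needed here — only the two homomorphism axioms together with the explicit formula \eqref{ET2} for the induced brackets.

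There is essentially no obstacle: the argument is a direct two-line substitution, and the only thing to be careful about is the order in which the two axioms are applied (first \eqref{homomorphism-2-3Lei} to pull $\phi_{V}$ through $\rho$, then \eqref{homomorphism-1-3Lei} to convert $\phi_{\g}T'$ into $T\phi_{V}$). One could note in passing that this is compatible with, and in a sense explains, Proposition \ref{ET}: the homomorphism $(\phi_{\g},\phi_{V})$ of embedding tensors maps the whole diagram $T'\colon V\to\g$ to $T\colon V\to\g$ equivariantly, so it must descend to a morphism of the induced $3$-Leibniz structures on $V$. If desired, one can also observe that $\phi_{V}$ being invertible in the isomorphism case makes it an isomorphism of $3$-Leibniz algebras, but this is immediate and need not be spelled out.
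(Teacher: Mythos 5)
Your proof is correct and is essentially identical to the paper's own argument: both apply \eqref{homomorphism-2-3Lei} with $x=T'u$, $y=T'v$ and then \eqref{homomorphism-1-3Lei} to convert $\phi_{\g}\circ T'$ into $T\circ\phi_{V}$. Nothing further is needed.
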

\begin{proof}
For all $u,v,w$, by \eqref{homomorphism-1-3Lei}-\eqref{homomorphism-2-3Lei}, we have
\begin{eqnarray*}
\phi_{V}([u,v,w]_{T^{\prime}})&=&\phi_{V}(\rho(T^{\prime}u,T^{\prime}v)w)\\
&=&\rho\big(\phi_{\g}(T^{\prime}u),\phi_{\g}(T^{\prime}v)\big)(\phi_{V}(w))\\
&=&\rho\big(T(\phi_{V}(u)),T(\phi_{V}(v))\big)(\phi_{V}(w))\\
&=&[\phi_{V}(u),\phi_{V}(v),\phi_{V}(w)]_{T}.
\end{eqnarray*}
Therefore, $\phi_{V}$ is a homomorphism of $3$-Leibniz algebras from $(V,[\cdot,\cdot,\cdot]_{T^{\prime}})$ to $(V,[\cdot,\cdot,\cdot]_{T})$.
\end{proof}

At the end of this section, we present some examples of embedding tensors on $3$-Lie algebras.
\begin{ex}
Recall that a derivation $D$ on a $3$-Lie algebra $(\g,[\cdot,\cdot,\cdot]_{\g})$ is a linear map $D:\g\rightarrow \g$ satisfying
\begin{equation*}
  D[x_{1},x_{2},x_{3}]_{\g}=[D(x_{1}),x_{2},x_{3}]_{\g}+[x_{1},D(x_{2}),x_{3}]_{\g}+[x_{1},x_{2},D(x_{3})]_{\g},\quad \forall x_{1},x_{2},x_{3} \in \g.
\end{equation*}
If $D \circ D=0,$ we have
\begin{eqnarray*}
D[D(x_{1}),D(x_{2}),x_{3}]_{\g}&=&[D^{2}(x_{1}),D(x_{2}),x_{3}]_{\g}+[D(x_{1}),D^{2}(x_{2}),x_{3}]_{\g}+[D(x_{1}),D(x_{2}),D(x_{3})]_{\g}\\
&=&[D(x_{1}),D(x_{2}),D(x_{3})]_{\g}.
\end{eqnarray*}
Then $D$ is an embedding tensor on the $3$-Lie algebra $(\g,[\cdot,\cdot,\cdot]_{\g})$ with respect to the adjoint representation $(\g;\ad)$.
\end{ex}
The notion of a strict $3$-Lie $2$-algebra was introduced in \cite{ZLS}.
\begin{ex}
A strict $3$-Lie $2$-algebra is a $2$-term graded vector spaces $V = V_{1}\oplus V_{0}$ equipped with a linear map $\dM :V_{1}\rightarrow V_{0}$ and
skew-symmetric trilinear maps $[\cdot,\cdot,\cdot]: V_{i}\wedge V_{j}\wedge V_{k}\rightarrow V_{i+j+k}$, where $0 \leq i+j+k \leq 1$, such that for all $x, y, x_{i} \in V_{0}, i=1,\cdots, 5$ and $f, g \in V_{1}$, the following equalities are satisfied{\rm:}
\begin{itemize}
\item[$\rm(i)$] $\dM[x,y,f]=[x,y,\dM f],\quad[\dM f,g,x]=[f,\dM g,x],$
\item[$\rm(ii)$]$[x_1,x_2,[x_3,x_4,x_5]]=[[x_1,x_2,x_3],x_4,x_5]+[x_3,[x_1,x_2,x_4],x_5]+[x_3,x_4,[x_1,x_2,x_5]],$
\item[$\rm(iii)$]$[f,x_2,[x_3,x_4,x_5]]~=[[f,x_2,x_3],x_4,x_5]+[x_3,[f,x_2,x_4],x_5]+[x_3,x_4,[f,x_2,x_5]],$
\item[$\rm(iv)$]$[x_1,x_2,[f,x_4,x_5]]~=[[x_1,x_2,f],x_4,x_5]+[f,[x_1,x_2,x_4],x_5]+[f,x_4,[x_1,x_2,x_5]].$
\end{itemize}
Define a linear map $\rho:\wedge^{2}V_0\rightarrow \gl(V_1)$ by
\begin{equation*}
  \rho(x,y)(f)=[x,y,f].
\end{equation*}
Then by $\rm(iii)$ and $\rm(iv)$, $(V_1,\rho)$ is a representation of the $3$-Lie algebra $(V_0,[\cdot,\cdot,\cdot])$. By $\rm(i)$,
$\dM$ is an embedding tensor on the $3$-Lie algebra $(V_0,[\cdot,\cdot,\cdot])$ with respect to the representation $(V_1;\rho)$.
\end{ex}
There is a one-to-one correspondence between strict 3-Lie 2-algebras and crossed modules of 3-Lie algebras \cite{ZLS}, then we have the following example.

\begin{ex}
A crossed module of $3$-Lie algebras is a quadruple $((\g, [\cdot,\cdot,\cdot]_{\g}),(\h, [\cdot,\cdot,\cdot]_{\h}), \mu,\alpha)$, where $(\g, [\cdot,\cdot,\cdot]_{\g})$ and $(\h, [\cdot,\cdot,\cdot]_{\h})$ are $3$-Lie algebras, $\mu:\g\rightarrow \h$ is a homomorphism of $3$-Lie
algebras, and $\alpha: \wedge^{2}\h\rightarrow \Der(\g)$ is a representation, such that for all $x, y \in \h, f, g, h \in \g$, the
following equalities hold{\rm :}
\begin{eqnarray*}
\mu(\alpha(x, y)(f)) &=& [x, y, \mu(f)]_{\h}, \\
\alpha(\mu(f), \mu(g))(h) &=& [f, g, h]_{\g},\\
\alpha(x, \mu(f))(g) &=& -\alpha(x, \mu(g))(f).
\end{eqnarray*}
Then $\mu$ is an embedding tensor on the $3$-Lie algebra $(\h, [\cdot,\cdot,\cdot]_{\h})$ with respect to the representation $(\g;\alpha)$.
\end{ex}

\begin{ex}
Let $(V ; \rho)$ be a representation of a $3$-Lie algebra $(\g, [\cdot,\cdot,\cdot]_{\g})$. If a linear map $T: V \rightarrow \g$ satisfies
\begin{equation*}
  T(\rho(x,Tu)v)=[x,Tu,Tv]_{\g}, \quad \forall x \in \g, u,v \in V.
\end{equation*}
Then T is an embedding tensor on the $3$ Lie algebra $(\g,[\cdot,\cdot,\cdot]_{\g})$ with respect to the representation $(V;\rho)$.
\end{ex}

\section{Maurer-Cartan characterization of embedding tensors on $3$-Lie algebras}\label{sec:ET}
In this section, we construct a Lie $3$-algebra whose Maurer-Cartan elements are embedding tensors on $3$-Lie algebras. Moreover, we obtain the twisted $L_{\infty}$-algebra that controls deformations of embedding tensors on $3$-Lie algebras.

The notion of $L_{\infty}$-algebras (also called strongly homotopy Lie algebras) were introduced by Schlessinger and Stasheff \cite{Lada,SS85}. $L_{\infty}$-algebras can be understood as  a generalization of differential graded Lie algebras in which the Jacobi rule is only satisfied up
to a hierarchy of higher homotopies.

A permutation $\sigma\in\mathbb S_n$ is called an $(i,n-i)$-{\bf shuffle} if $\sigma(1)<\cdots <\sigma(i)$ and $\sigma(i+1)<\cdots <\sigma(n)$. If $i=0$ or $n$ we assume $\sigma=\Id$. The set of all $(i,n-i)$-shuffles will be denoted by $\mathbb S_{(i,n-i)}$.
\begin{defi}{\rm (\cite{stasheff:shla})}
An {\em  $L_\infty$-algebra} is a $\mathbb Z$-graded vector space $\g=\oplus_{k\in\mathbb Z}\g^k$ equipped with a collection $(k\ge 1)$ of linear maps $l_k:\otimes^k\g\rightarrow\g$ of degree $1$ with the property that, for any homogeneous elements $x_1,\cdots,x_n\in \g$, we have
\begin{itemize}
\item[\rm(i)]
{\em (graded symmetry)} for every $\sigma\in\mathbb S_{n}$,
\begin{eqnarray*}
l_n(x_{\sigma(1)},\cdots,x_{\sigma(n-1)},x_{\sigma(n)})=\varepsilon(\sigma)l_n(x_1,\cdots,x_{n-1},x_n).
\end{eqnarray*}
\item[\rm(ii)] {\em (generalized Jacobi identity)} for all $n\ge 1$,
\begin{eqnarray*}\label{sh-Lie}
\sum_{i=1}^{n}\sum_{\sigma\in \mathbb S_{(i,n-i)} }\varepsilon(\sigma)l_{n-i+1}(l_i(x_{\sigma(1)},\cdots,x_{\sigma(i)}),x_{\sigma(i+1)},\cdots,x_{\sigma(n)})=0.
\end{eqnarray*}
\end{itemize}
\end{defi}

A Lie $k$-algebra is a special $L_\infty$-algebra which was introduced by Hanlon and Wachs in \cite{Hanlon-Wachs}, which only equipped with the $k$-ary bracket, and other operations are trivial.

\begin{defi}\label{symmetric-lie}
A {\bf Lie $3$-algebra} is a $\mathbb Z$-graded vector space $\g=\oplus_{k\in\mathbb Z}\g^k$ equipped with a trilinear bracket $
\{\cdot,\cdot,\cdot\}_\g:\g\otimes \g\otimes\g\lon \g$  of degree $1$, satisfying
\begin{itemize}\item[\rm(i)]
{\em (graded symmetry)} for all homogeneous elements  $x_1,x_2,x_3\in\g$,
\begin{eqnarray}\label{Lie-3-alg-1}
\{x_1,x_2,x_3\}_\g=(-1)^{x_1x_2}\{x_2,x_1,x_3\}_\g=(-1)^{x_2x_3}\{x_1,x_3,x_2\}_\g.
\end{eqnarray}
\item[\rm(ii)] {\em (generalized Jacobi identity)} for all homogeneous elements $ x_{i}\in \g, 1\leq i\leq 5$,
\begin{eqnarray}\label{Lie-3-alg-2}
\sum_{\sigma\in \mathbb S_{5} }\varepsilon(\sigma)\{\{x_{\sigma(1)},x_{\sigma(2)},x_{\sigma(3)}\}_\g,x_{\sigma(4)},x_{\sigma(5)}\}_\g=0.
\end{eqnarray}
\end{itemize}
\end{defi}

\begin{defi}
\begin{itemize}
\item[{\rm(i)}] A {\bf  Maurer-Cartan element} of an $L_\infty$-algebra $(\g,\{l_i\}_{i=1}^{+\infty})$ is an element $\alpha\in \g^0$ satisfying the Maurer-Cartan equation
\begin{eqnarray}\label{MC-equationL}
\sum_{n=1}^{+\infty} \frac{1}{n!}l_n(\alpha,\cdots,\alpha)=0.
\end{eqnarray}

  \item[{\rm(ii)}]
  A {\bf  Maurer-Cartan element} of a Lie $3$-algebra $(\g,\{\cdot,\cdot,\cdot\}_\g)$ is an element $\alpha\in \g^0$ satisfying the Maurer-Cartan equation
\begin{eqnarray}\label{MC-equation}
\frac{1}{3!}\{\alpha,\alpha,\alpha\}_\g=0.
\end{eqnarray}
\end{itemize}
\end{defi}

Let $\alpha$ be a Maurer-Cartan element of a Lie $3$-algebra $(\g,\{\cdot,\cdot,\cdot\}_\g)$. For all $k\geq1$ and $x_1,\cdots,x_k\in \g,$
define a series of linear maps $l_k^\alpha:\otimes^k\g\lon\g$ of degree $1$ by
\begin{eqnarray}
l_k^{\alpha}(x_1,\cdots,x_k)&=&\sum\limits_{n=0}^{+\infty}\frac{1}{n!}l_{k+n}\{\underbrace{\alpha,\cdots,\alpha}_{n},x_1,\cdots,x_k\}_{\g}.
\end{eqnarray}
\begin{lem}{\rm (\cite{Getzler})}\label{Getzler-th}
With the above notations, $(\g,l_1^{\alpha},l_2^{\alpha},l_3^{\alpha})$ is an $L_\infty$-algebra, obtained from the Lie $3$-algebra $(\g,\{\cdot,\cdot,\cdot\}_\g)$ by twisting with the Maurer-Cartan element $\alpha$. Moreover, $\alpha+\alpha'$ is a Maurer-Cartan element of   $(\g,\{\cdot,\cdot,\cdot\}_\g)$ if and only if $\alpha'$ is a Maurer-Cartan element of the twisted $L_\infty$-algebra $(\g,l_1^{\alpha},l_2^{\alpha},l_3^{\alpha})$.
\end{lem}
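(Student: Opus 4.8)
The plan is to recognize that Lemma~\ref{Getzler-th} is a direct specialization of Getzler's general twisting procedure for $L_\infty$-algebras to the case of a Lie $3$-algebra, so the proof reduces to a bookkeeping exercise in which all higher brackets $l_k$ with $k\neq 3$ vanish. First I would note that a Lie $3$-algebra $(\g,\{\cdot,\cdot,\cdot\}_\g)$ is precisely an $L_\infty$-algebra $(\g,\{l_i\}_{i\geq1})$ with $l_i=0$ for $i\neq 3$ and $l_3=\{\cdot,\cdot,\cdot\}_\g$; under this dictionary the single nontrivial instance of the generalized Jacobi identity is exactly \eqref{Lie-3-alg-2}. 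Then I would invoke Getzler's theorem (the cited \cite{Getzler}) that for any Maurer--Cartan element $\alpha\in\g^0$ of an $L_\infty$-algebra, the twisted maps $l_k^\alpha(x_1,\dots,x_k)=\sum_{n\geq0}\tfrac1{n!}l_{k+n}(\underbrace{\alpha,\dots,\alpha}_n,x_1,\dots,x_k)$ again form an $L_\infty$-algebra, and that $\alpha+\alpha'$ is Maurer--Cartan for the original structure iff $\alpha'$ is Maurer--Cartan for the twisted one.

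Next I would carry out the truncation: since $l_{k+n}=0$ whenever $k+n\neq3$, the sum defining $l_k^\alpha$ is empty for $k\geq4$, so the twisted $L_\infty$-structure has at most the brackets $l_1^\alpha,l_2^\alpha,l_3^\alpha$, given explicitly by
\begin{eqnarray*}
l_1^\alpha(x)&=&\tfrac12\{\alpha,\alpha,x\}_\g,\\
l_2^\alpha(x,y)&=&\{\alpha,x,y\}_\g,\\
l_3^\alpha(x,y,z)&=&\{x,y,z\}_\g,
\end{eqnarray*}
all other $l_k^\alpha$ being zero (here I use the Maurer--Cartan equation \eqref{MC-equation} only to identify which terms survive). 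I would observe that the degree conventions are consistent: $\alpha\in\g^0$ and $\{\cdot,\cdot,\cdot\}_\g$ has degree $1$, so each $l_k^\alpha$ has degree $1$ as required, and graded symmetry of $l_k^\alpha$ is inherited from \eqref{Lie-3-alg-1} together with the symmetry of plugging in copies of the even element $\alpha$. Finally I would spell out that the $L_\infty$ relations for $(\g,l_1^\alpha,l_2^\alpha,l_3^\alpha)$ are precisely the $n\leq5$ instances of Getzler's generalized Jacobi identity applied to the truncated original structure, and that the Maurer--Cartan correspondence $\tfrac1{3!}\{\alpha+\alpha',\alpha+\alpha',\alpha+\alpha'\}_\g=0\iff\sum_{n\geq1}\tfrac1{n!}l_n^\alpha(\alpha',\dots,\alpha')=0$ is obtained by expanding the left side trilinearly and matching terms.

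The one genuinely substantive point — and the step I expect to be the main obstacle, or at least the only thing requiring care rather than citation — is verifying that Getzler's construction applies verbatim in the graded-\emph{symmetric} (rather than graded-antisymmetric) sign convention used here for Lie $3$-algebras, and that the combinatorial factors $\tfrac1{n!}$ match. Depending on how strictly one wants to be self-contained, I would either (a) simply cite \cite{Getzler} and \cite{Hanlon-Wachs} and assert the specialization, or (b) give a short direct check of the generalized Jacobi identity for $l^\alpha$ in arities up to $5$: the arity-$5$ relation for $l_3^\alpha$ is \eqref{Lie-3-alg-2} itself; the lower-arity relations follow by repeatedly applying \eqref{Lie-3-alg-2} with one, two, or three of the five slots occupied by $\alpha$ and then using $\{\alpha,\alpha,\alpha\}_\g=0$ to kill the leftover terms. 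I would present option (a) as the main argument and relegate the explicit sign/factor verification to a remark, since it is entirely routine given the cited results.
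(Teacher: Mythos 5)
Your proposal is correct and matches the paper's treatment: the paper gives no proof of this lemma at all, simply citing Getzler's general twisting theorem, and your argument is exactly that citation plus the routine truncation showing $l_k^\alpha=0$ for $k\geq 4$ and the trinomial expansion for the Maurer--Cartan correspondence. Your explicit formulas for $l_1^\alpha,l_2^\alpha,l_3^\alpha$ agree with the paper's equations \eqref{eq:=L1}--\eqref{eq:=L4}, and your flagged concern about the graded-symmetric sign convention is the right (and only) point needing care.
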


In the sequel, we recall Th. Voronov's derived brackets theory \cite{Vo}, which is a useful tool to construct explicit $L_\infty$-algebras.

\begin{defi}{\rm (\cite{Vo})}
A {\bf $V$-data} consists of a quadruple $(L,F,\mathcal{P},\Delta)$ where
\begin{itemize}
\item[$\bullet$] $(L,[\cdot,\cdot])$ is a graded Lie algebra;
\item[$\bullet$] $F$ is an abelian graded Lie subalgebra of $(L,[\cdot,\cdot])$;
\item[$\bullet$] $\huaP:L\lon L$ is a projection, that is $\mathcal{P}\circ \mathcal{P}=\mathcal{P}$, whose image is $F$ and kernel is a  graded Lie subalgebra of $(L,[\cdot,\cdot])$;
\item[$\bullet$] $\Delta$ is an element in $  \ker(\huaP)^1$ such that $[\Delta,\Delta]=0$.
\end{itemize}
\end{defi}

\begin{thm}{\rm (\cite{Vo})}\label{thm:db}
Let $(L,F,\mathcal{P},\Delta)$ be a $V$-data. Then $(F,\{{l_k}\}_{k=1}^{+\infty})$ is an $L_\infty$-algebra, where
\begin{eqnarray}\label{V-shla}
l_k(a_1,\cdots,a_k)=\mathcal{P}\underbrace{[\cdots[[}_k\Delta,a_1],a_2],\cdots,a_k],\quad\mbox{for homogeneous}~   a_1,\cdots,a_k\in F.
\end{eqnarray}
 We call $\{{l_k}\}_{k=1}^{+\infty}$ the {\bf higher derived brackets} of the $V$-data $(L,F,\mathcal{P},\Delta)$. %Moreover, we denote this $L_\infty$-algebra by $\h_{\Delta}^{P}$.
\end{thm}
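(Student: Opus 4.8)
The final statement to prove is Voronov's higher derived bracket theorem (Theorem \ref{thm:db}): given a $V$-data $(L,F,\mathcal{P},\Delta)$, the maps $l_k$ defined in \eqref{V-shla} make $F$ into an $L_\infty$-algebra. The plan is to verify the two axioms in the definition of $L_\infty$-algebra: graded symmetry of each $l_k$, and the generalized Jacobi identity. The key structural observation is that since $F$ is an \emph{abelian} subalgebra, for any $a,b\in F$ we have $[a,b]=0$, and since $\ker(\mathcal{P})$ is a subalgebra, the ``correction terms'' that would otherwise spoil the identities land in a controllable place. I would first record the auxiliary operators $\Phi_k\colon L\to L$ (or rather multilinear maps $L\times\cdots\times L\to L$) given by $\Phi_k(a_1,\dots,a_k)=[\cdots[[\Delta,a_1],a_2],\cdots,a_k]$ for $a_i\in F$, so that $l_k=\mathcal{P}\circ\Phi_k$ on $F$.

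First I would prove graded symmetry. Because the $a_i$ lie in the abelian subalgebra $F$, the graded Jacobi identity in $L$ gives $[[\,\cdot\,,a_i],a_{i+1}]=(-1)^{a_ia_{i+1}}[[\,\cdot\,,a_{i+1}],a_i]$ after using $[a_i,a_{i+1}]=0$; iterating adjacent transpositions shows $\Phi_k(a_1,\dots,a_k)$ is graded symmetric in the $a_i$, hence so is $l_k$. Note each $l_k$ has degree $1$: $\Delta$ has degree $1$, the $a_i\in F\subseteq L$ are taken homogeneous, and the bracket is degree $0$, and since we ultimately apply $l_k$ inside the $L_\infty$ axioms to elements of $F^{\bullet}$ shifted appropriately, the degree bookkeeping works out (this is a standard décalage; I would just remark that degrees are as claimed). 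The main work is the generalized Jacobi identity. The strategy is: expand $\sum_{i+j=n+1}\sum_{\sigma}\varepsilon(\sigma)\,l_j\big(l_i(a_{\sigma(1)},\dots,a_{\sigma(i)}),a_{\sigma(i+1)},\dots,a_{\sigma(n)}\big)$ using $l_i=\mathcal{P}\Phi_i$. The crucial algebraic input is the identity $[\Delta,\Delta]=0$ together with the fact that $\mathcal{P}$ is a projection with $\ker\mathcal{P}$ a subalgebra: writing $\id=\mathcal{P}+(\id-\mathcal{P})$ and using that $(\id-\mathcal{P})$ projects onto a subalgebra, one shows the inner $\mathcal{P}$ in $l_j(l_i(\dots),\dots)$ can be dropped modulo terms that telescope. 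Concretely, $[\Phi_i(a_1,\dots,a_i),\,\text{stuff in }F]$ can be reorganized, and the sum over all shuffles of the nested brackets $[\cdots[[[\Delta,\Delta],a_1],\dots],a_n]$ vanishes because $[\Delta,\Delta]=0$; the difference between this vanishing sum and the Jacobi expression we want is exactly accounted for by the projector identities. I would carry this out by induction on $n$, or better, cite the clean bookkeeping: expand $[[\Delta,\Delta],a_1,\dots]$ via the graded Leibniz rule for $\ad_{a_i}$ and the Jacobi identity in $L$.

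The main obstacle is the combinatorial bookkeeping in the generalized Jacobi identity: one must track the Koszul signs $\varepsilon(\sigma)$ through the nested brackets and match the shuffle sum arising from expanding $[\,[\Delta,\Delta]\,,a_1,\dots,a_n]=0$ (via repeated application of the Jacobi identity in $L$) against the sum $\sum_{i,\sigma}\varepsilon(\sigma)l_{n-i+1}(l_i(\dots),\dots)$ in the $L_\infty$ axiom. The place where the $V$-data hypotheses are all used is precisely here: $[\Delta,\Delta]=0$ provides the master equation; $F$ abelian kills the terms where two of the $a_i$ get bracketed together; $\mathcal{P}$ being a projection onto $F$ with $\ker\mathcal{P}$ a subalgebra ensures $\mathcal{P}[\Phi_i(\dots),\Phi_{j}(\dots)\text{-type terms}]$ simplifies so that the outer bracket ``sees through'' the inner projector. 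I would present the argument by first proving a lemma that $\mathcal{P}\big[[\cdots[\Delta,a_1],\dots,a_i],\,b\big]=\mathcal{P}\big[\,[\cdots[\mathcal{P}[\cdots[\Delta,a_1],\dots,a_i],\dots]\,\big]$-style compatibility, i.e.\ that inserting $\mathcal{P}$ in the middle is harmless, and then the Jacobi identity follows by expanding $0=\mathcal{P}[\cdots[[\Delta,\Delta],a_{\sigma(1)}],\dots,a_{\sigma(n)}]$ summed over shuffles. Since this is Voronov's theorem, I would in fact just cite \cite{Vo} for the full verification and present only this sketch, as the paper does.
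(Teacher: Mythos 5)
Your proposal is consistent with the paper's treatment: the paper offers no proof of Theorem \ref{thm:db} and simply cites Voronov, and your sketch is a faithful outline of Voronov's actual argument (graded symmetry from $F$ abelian plus the Jacobi identity in $L$; the generalized Jacobi identity from expanding $\mathcal{P}[\cdots[[\Delta,\Delta],a_1],\cdots,a_n]=0$ and using that $\ker\mathcal{P}$ is a subalgebra to control the insertion of the inner projector). No gaps worth flagging; deferring the sign and shuffle bookkeeping to \cite{Vo} is exactly what the paper does.
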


Let $\g$ be a vector space. We consider the graded vector space $$C^{\ast}(\g,\g)=\oplus_{n\geq 0}C^{n}(\g,\g)=\oplus_{n\geq 0}\Hom(\underbrace{\otimes^{2}\g\otimes\cdots\otimes^{2}\g}_{n}\otimes\g,\g).$$
\begin{thm}{\rm (\cite{Rotkiewicz})}\label{thm:graded-3-Lebiniz}
 The graded vector space $C^{\ast}(\g,\g)$ equipped with the graded bracket
 \begin{equation*}
[P,Q]_{\Le}=P\circ Q-(-1)^{pq}Q\circ P, \quad \forall P \in C^{p}(\g,\g),  Q\in C^{q}(\g,\g),
\end{equation*}
 is a graded Lie algebra, where $P\circ Q\in C^{p+q}(\g,\g)$ is defined by
\begin{eqnarray*}
&&(P\circ Q)(\mathfrak{X}_{1},\cdots,\mathfrak{X}_{p+q},x)\\
&=&\sum_{k=1}^{p}(-1)^{(k-1)q}\sum_{\sigma \in S(k-1,q)}(-1)^{\sigma}P\Big(\mathfrak{X}_{\sigma(1)},\cdots,\mathfrak{X}_{\sigma(k-1)},Q(\mathfrak{X}_{\sigma(k)},\cdots,\mathfrak{X}_{\sigma(k+q-1)},x_{k+q})\otimes y_{k+q},\\
&&\mathfrak{X}_{k+q+1},\cdots,\mathfrak{X}_{p+q},x\Big)\\
&&+\sum_{k=1}^{p}(-1)^{(k-1)q}\sum_{\sigma \in S(k-1,q)}(-1)^{\sigma}P\Big(\mathfrak{X}_{\sigma(1)},\cdots,\mathfrak{X}_{\sigma(k-1)},x_{k+q}\otimes Q(\mathfrak{X}_{\sigma(k)},\cdots,\mathfrak{X}_{\sigma(k+q-1)},y_{k+q}),\\
&&\mathfrak{X}_{k+q+1},\cdots,\mathfrak{X}_{p+q},x\Big)\\
&&+\sum_{\sigma \in S(p,q)}(-1)^{pq}(-1)^{\sigma}P\Big(\mathfrak{X}_{\sigma(1)},\cdots,\mathfrak{X}_{\sigma(p)},Q(\mathfrak{X}_{\sigma(p+1)},\cdots,\mathfrak{X}_{\sigma(p+q)},x)\Big),
\end{eqnarray*}
for all $\mathfrak{X}_{i}=x_{i}\otimes y_{i}\in \otimes^{2}\g,i=1,2,\cdots,p+q$ and $x\in\g.$
\end{thm}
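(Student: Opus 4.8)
The plan is to verify directly that the bracket $[P,Q]_{\Le} = P\circ Q - (-1)^{pq} Q\circ P$ endows $C^{\ast}(\g,\g)$ with the structure of a graded Lie algebra, which amounts to checking three things: that $\circ$ raises degree by the expected amount (so that $[P,Q]_{\Le}\in C^{p+q}(\g,\g)$), that the bracket is graded antisymmetric, and that it satisfies the graded Jacobi identity. The first point is immediate from the definition of $\circ$: inserting $Q$, which eats $2q$ slots of the form $\otimes^2\g$ plus one $\g$-slot and outputs a single element (which is then paired either on the left or the right of a remaining $\g$-argument to rebuild an $\otimes^2\g$ entry), into $P$ produces a cochain with $2(p+q)$ arguments grouped into pairs plus a final $\g$-argument, i.e. an element of $C^{p+q}(\g,\g)$. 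Graded antisymmetry $[P,Q]_{\Le} = -(-1)^{pq}[Q,P]_{\Le}$ is a formal consequence of the defining formula, exactly as in the classical Nijenhuis--Richardson / Gerstenhaber setting.

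The substantive step, and the one I expect to be the main obstacle, is the graded Jacobi identity
\[
(-1)^{pr}[[P,Q]_{\Le},R]_{\Le} + (-1)^{qp}[[Q,R]_{\Le},P]_{\Le} + (-1)^{rq}[[R,P]_{\Le},Q]_{\Le} = 0,
\]
equivalently the graded pre-Lie-type identity
\[
(P\circ Q)\circ R - P\circ(Q\circ R) = (-1)^{qr}\big((P\circ R)\circ Q - P\circ(Q\circ R)\big)
\]
expressing that $\circ$ is a graded (right-symmetric) pre-Lie product; once this ``associator is graded symmetric in the last two slots'' statement is established, the Jacobi identity follows by a standard formal manipulation. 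Proving the pre-Lie identity requires expanding $(P\circ Q)\circ R$ and $P\circ(Q\circ R)$ according to the three-summand definition of $\circ$ in Theorem~\ref{thm:graded-3-Lebiniz}, and checking that all the terms in which $Q$ and $R$ are ``nested'' one inside the other survive (and match between the two sides), while all the terms in which $Q$ and $R$ are inserted into $P$ at disjoint places appear symmetrically and hence cancel in the associator difference. The bookkeeping is delicate because of the two ways an internal output can be tensored (left factor $x_{k+q}\otimes Q(\cdots)$ versus right factor $Q(\cdots)\otimes y_{k+q}$) and because the shuffle signs $(-1)^\sigma$ and the position signs $(-1)^{(k-1)q}$ must be tracked through the re-indexing; this is precisely the $n$-Leibniz analogue of Rotkiewicz's computation and of the Balavoine bracket for Leibniz algebras.

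Concretely, I would proceed as follows. First, fix notation: write $\mathfrak{X}_i = x_i\otimes y_i \in \otimes^2\g$ and keep the last $\g$-argument $x$ separate throughout. Second, observe that the three terms in the formula for $P\circ Q$ correspond to: (a) $Q$'s output placed as the \emph{first} factor of the $k$-th pair, (b) $Q$'s output placed as the \emph{second} factor of the $k$-th pair, and (c) $Q$'s output placed in the final $\g$-slot; I would organize the expansion of the double composition according to where $R$'s output ends up relative to $Q$'s. Third, in the associator $(P\circ Q)\circ R - P\circ(Q\circ R)$, isolate the ``disjoint insertion'' terms — those where $R$ is inserted into $P$ at a place not touched by $Q$ — and check via a reindexing of shuffles that these are symmetric under $Q\leftrightarrow R$ with the sign $(-1)^{qr}$, hence cancel against the corresponding terms of $(P\circ R)\circ Q - P\circ(R\circ Q)$. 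Fourth, check that the remaining ``nested'' terms of $(P\circ Q)\circ R$ reproduce exactly $P\circ(Q\circ R)$, using the shuffle identity $S(k-1,q)\times S(\text{appropriate})\cong$ (refinement) to split the shuffles correctly. Since Theorem~\ref{thm:graded-3-Lebiniz} is quoted from \cite{Rotkiewicz}, in the paper itself one may legitimately cite that reference for the detailed verification rather than reproduce the full sign computation; the role of this subsection is to record the graded Lie algebra $(C^\ast(\g,\g),[\cdot,\cdot]_{\Le})$ that will serve, together with a suitable abelian subalgebra and projection, as the $L$ in the $V$-data of Theorem~\ref{thm:db}.
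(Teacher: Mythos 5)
The paper does not prove this statement at all: Theorem~\ref{thm:graded-3-Lebiniz} is imported verbatim from \cite{Rotkiewicz} and used as a black box, its only role being to supply the graded Lie algebra $L=(C^{\ast}(\g\oplus V,\g\oplus V),[\cdot,\cdot]_{\Le})$ entering the $V$-data of Theorem~\ref{thm:db} (together with the characterization of $3$-Leibniz structures as Maurer--Cartan elements recorded in Remark~\ref{rmk:graded-3-Lebiniz}). So there is no in-paper argument to compare against; what you propose is the standard direct verification, and it is the correct route: the degree count and the graded antisymmetry are formal, and the graded Jacobi identity reduces to showing that $\circ$ is a graded right pre-Lie product, i.e.\ that the associator $(P\circ Q)\circ R-P\circ(Q\circ R)$ is graded symmetric in $Q$ and $R$, which one checks by sorting the terms of the double insertion into ``nested'' configurations (reproducing $P\circ(Q\circ R)$ via the refinement property of shuffles) and ``disjoint'' configurations (symmetric under $Q\leftrightarrow R$ up to the sign $(-1)^{qr}$, hence cancelling in the associator difference). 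This is exactly the Gerstenhaber/Balavoine mechanism adapted to the $(\otimes^2\g)^{\otimes n}\otimes\g$ cochains, and it is presumably the content of the cited reference. Two small points: in your displayed right-symmetry identity the last term on the right-hand side should read $P\circ(R\circ Q)$, not $P\circ(Q\circ R)$ (you state it correctly later in the prose); and since you do not actually carry out the sign bookkeeping for the disjoint-insertion cancellation, your text is a proof plan rather than a proof --- which is acceptable here precisely because, as you note, the paper itself delegates the verification to \cite{Rotkiewicz}.
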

\begin{rmk}\label{rmk:graded-3-Lebiniz}
For $\pi \in \Hom(\otimes^{3}\g,\g)$, we have
\begin{eqnarray*}
[\pi,\pi]_{\Le}(\mathfrak{X}_{1},\mathfrak{X}_{2},x)&=&2(\pi \circ \pi)(\mathfrak{X}_{1},\mathfrak{X}_{2},x)\\
&=&2\Big(\pi(\pi(x_{1},y_{1},x_{2}),y_{2},x)+\pi(x_{2},\pi(x_{1},y_{1},y_{2}),x)\\
&&-\pi(x_{1},y_{1},\pi(x_{2},y_{2},x))+\pi(x_{2},y_{2},\pi(x_{1},y_{1},x))\Big).
\end{eqnarray*}
\end{rmk}
Thus, $\pi$ is a $3$-Leibniz algebra structure if and only if $[\pi, \pi]_{\Le}=0$, i.e. $\pi$ is a Maurer-Cartan element of the graded Lie algebra $\big(C^{\ast}(\g, \g),[\cdot,\cdot]_{\Le}\big)$.

Let $(V ; \rho)$ be a representation of a 3-Lie algebra $(\g, [\cdot,\cdot,\cdot]_{\g})$. For convenience, we use $\mu:\wedge^{3}\g\rightarrow\g$ to indicate the 3-Lie bracket $[\cdot,\cdot,\cdot]_{\g}$. In the sequel, we use $\mu\boxplus\rho$ to denote the element in $\Hom(\otimes^{3}(\g \oplus V),\g \oplus V)$ given by
\begin{eqnarray}
(\mu \boxplus \rho)\big(x+u,y+v,z+w\big)=[x,y,z]_{\g}+\rho(x,y)w,
\end{eqnarray}
for all $x,y,z\in\g,u,v,w\in V.$ Note that the right hand side is exactly the hemisemidirect product $3$-Leibniz algebra structure given in Proposition \ref{hemi-direct}. Therefore by Remark \ref{rmk:graded-3-Lebiniz}, we have
$$[\mu\boxplus\rho,\mu\boxplus\rho]_{\Le}=0.$$

\begin{lem}\label{lem-equation-1}
Let $T:V\rightarrow\g$ be an embedding tensor on a $3$-Lie algebra $(\g,[\cdot,\cdot,\cdot]_\g)$ with respect to the representation $(V;\rho)$. For all $x,y,z\in \g, u,v,w\in V,$
we have
\begin{eqnarray*}
% \nonumber to remove numbering (before each equation)
&&[\mu\boxplus\rho,T]_{\Le}(x+u,y+v,z+w)\\
&=&[Tu,y,z]_{\g}+[x,Tv,z]_{\g}+[x,y,Tw]_{\g}-T(\rho(x,y)w)+\rho(Tu,y)w+\rho(x,Tv)w;\\
&&[[\mu \boxplus \rho,T]_{\Le},T]_{\Le}(x+u,y+v,z+w)\\
&=&2\Big([Tu,Tv,z]_{\g}+[Tu,y,Tw]_{\g}+[x,Tv,Tw]_{\g}-T(\rho(Tu,y)w+\rho(x,Tv)w)+\rho(Tu,Tv)w\Big).
\end{eqnarray*}

\end{lem}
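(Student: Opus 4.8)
The plan is to compute both graded brackets directly from the definition of $\circ$ given in Theorem \ref{thm:graded-3-Lebiniz}, using that $T \in C^{1}(\g\oplus V, \g\oplus V) = \Hom(\g\oplus V, \g\oplus V)$ has degree $1$ while $\mu\boxplus\rho \in C^{1}(\g\oplus V,\g\oplus V)$... wait, we should be careful: $\mu\boxplus\rho \in \Hom(\otimes^{3}(\g\oplus V),\g\oplus V) = C^{1}(\g\oplus V,\g\oplus V)$ as well, since $C^{n}$ has $n$ copies of $\otimes^{2}$ plus one extra slot, so $C^{1}$ corresponds to trilinear maps. So both $\mu\boxplus\rho$ and $T$ sit in $C^{1}$; here $T$ is extended to a map on $\g\oplus V$ by declaring it zero on $\g$, i.e. $T(x+u) = Tu$. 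The first step is therefore to unwind $[\mu\boxplus\rho, T]_{\Le} = (\mu\boxplus\rho)\circ T - (-1)^{1\cdot 1} T\circ(\mu\boxplus\rho) = (\mu\boxplus\rho)\circ T + T\circ(\mu\boxplus\rho)$, being careful with the sign $(-1)^{pq}$ with $p=q=1$.

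Next I would evaluate $(P\circ Q)(\mathfrak X_{1},\mathfrak X_{2},x)$ for $P,Q \in C^{1}$ using the three-line formula in Theorem \ref{thm:graded-3-Lebiniz} specialized to $p=q=1$: the $k$-sum collapses to $k=1$, the shuffle sets $S(0,1)$ are trivial, and one gets $(P\circ Q)(x_{1}\otimes y_{1}, x_{2}\otimes y_{2}, x) = P(Q(x_{1},y_{1},x_{2})\otimes y_{2}, x) + P(x_{2}\otimes Q(x_{1},y_{1},y_{2}), x) - P(x_{1},y_{1},Q(x_{2},y_{2},x))$ — matching exactly the formula recorded in Remark \ref{rmk:graded-3-Lebiniz} with $\pi$ replaced by a mixed $P,Q$. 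For $Q = T$ (which kills the "middle" $y$-type arguments and on a single slot returns $Tu$), most terms vanish or simplify; plugging in $x_{i}+u_{i}$ and using $(\mu\boxplus\rho)(a,b,c) = [\,\bar a,\bar b,\bar c\,]_{\g} + \rho(\bar a,\bar b)(c)_{V}$ where $\bar{\phantom{a}}$ denotes the $\g$-component, one reads off the six listed terms: three "$[T\cdot,\cdot,\cdot]_{\g}$" terms from $T$ feeding into a $\g$-slot of $\mu$, one "$-T(\rho(x,y)w)$" term from $\mu\boxplus\rho$ feeding its $\rho$-output into $T$, and two "$\rho(T\cdot,\cdot)w$" terms. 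Then I would iterate: apply $[\,\cdot\,,T]_{\Le}$ again to the degree-$2$ element $[\mu\boxplus\rho,T]_{\Le}$, i.e. compute $\big[[\mu\boxplus\rho,T]_{\Le},T\big]_{\Le} = [\mu\boxplus\rho,T]_{\Le}\circ T - (-1)^{2\cdot 1}T\circ[\mu\boxplus\rho,T]_{\Le}$; since the degree-$2$ cochain lives in $C^{2}$, one now uses the $p=2,q=1$ case of the formula, where the shuffle set $S(1,1)$ contributes a genuine sum and a sign $(-1)^{\sigma}$, producing the factor $2$ in the final expression.

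The main obstacle will be bookkeeping the signs and the combinatorics of the shuffle sums when passing to the second bracket, where $P$ has degree $2$: one must correctly track $(-1)^{(k-1)q}$, $(-1)^{\sigma}$ over $S(k-1,1)$ and the overall $(-1)^{pq}$ in the last line of the $\circ$-formula, and verify that the cross terms recombine (with the $T\circ[\mu\boxplus\rho,T]_{\Le}$ piece) into the symmetric-looking sum with the coefficient $2$. A secondary but routine point is keeping straight that $T$ vanishes on $\g$ and is only nonzero on $V$, so that e.g. $T$ applied to a $\g$-valued slot gives $0$ while $T(\rho(Tu,y)w + \rho(x,Tv)w)$ survives because its argument is in $V$; and that $\rho$ only acts nontrivially when its first two arguments are in $\g$ and produces an element of $V$, so terms like $\rho(Tu,Tv)w$ appear but $\rho$ never sees a $V$-component in its first two slots.

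Once both identities are established, no further input is needed — they are purely the definition of $[\cdot,\cdot]_{\Le}$ evaluated on $\mu\boxplus\rho$ and $T$, using the embedding-tensor condition nowhere in the \emph{statement} of the lemma (the hypothesis that $T$ is an embedding tensor is not actually used to derive these bracket formulas; it will be used later, when these expressions are combined with $[\mu\boxplus\rho,\mu\boxplus\rho]_{\Le}=0$ to characterize Maurer--Cartan elements). So I would present the proof as: (1) record the $p=q=1$ and $p=2,q=1$ specializations of the $\circ$-formula; (2) substitute and collect terms for the first bracket; (3) substitute and collect for the second bracket, tracking the shuffle sign to get the factor $2$; (4) conclude.
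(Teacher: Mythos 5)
Your overall strategy---unwinding the definition of $\circ$ from Theorem \ref{thm:graded-3-Lebiniz} and collecting terms---is exactly what the paper's one-line proof intends, but your degree bookkeeping is wrong in a way that would derail the computation. In the complex $C^{n}(\g\oplus V,\g\oplus V)=\Hom\big((\otimes^{2}(\g\oplus V))^{\otimes n}\otimes(\g\oplus V),\g\oplus V\big)$ an element of $C^{n}$ is a $(2n+1)$-linear map, so the linear map $T$ (extended by zero on $\g$) lives in $C^{0}$ and has degree $0$, while the trilinear $\mu\boxplus\rho$ lives in $C^{1}$ and has degree $1$; the paper itself calls $T$ ``a degree zero element'' in the proof of Theorem \ref{thm:=ET1}, and Maurer--Cartan elements are by definition of degree $0$. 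Consequently $[\mu\boxplus\rho,T]_{\Le}=(\mu\boxplus\rho)\circ T-(-1)^{1\cdot 0}\,T\circ(\mu\boxplus\rho)$ is a genuine commutator, not the anticommutator you wrote using $(-1)^{1\cdot 1}=-1$. This sign is not cosmetic: the term $-T(\rho(x,y)w)$ in the first identity is precisely the contribution of $-T\circ(\mu\boxplus\rho)$, so with your $+$ sign you would obtain $+T(\rho(x,y)w)$ and fail to reproduce the stated formula. (Incidentally, your $p=q=1$ specialization of the $\circ$-formula also drops the second $S(1,1)$-shuffle term $+P(x_{2},y_{2},Q(x_{1},y_{1},x))$ recorded in Remark \ref{rmk:graded-3-Lebiniz}, though that case is not the one needed here.)

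The same error propagates to the second bracket. Since $\deg T=0$, the cochain $[\mu\boxplus\rho,T]_{\Le}$ has degree $1$, i.e.\ it is again a trilinear map in $C^{1}$---this is forced by the statement of the lemma itself, which evaluates it on three arguments, whereas a $C^{2}$-element would take five. So the second bracket is again a $p=1,\ q=0$ computation: writing $\Pi=[\mu\boxplus\rho,T]_{\Le}$, one has
$[\Pi,T]_{\Le}(a\otimes b,c)=\Pi(Ta\otimes b,c)+\Pi(a\otimes Tb,c)+\Pi(a,b,Tc)-T\big(\Pi(a,b,c)\big)$
for $a,b,c\in\g\oplus V$. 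No $S(1,1)$ shuffle sum occurs anywhere; the factor $2$ arises because each displayed term appears in exactly two of these four summands (for instance $[Tu,Tv,z]_{\g}$ and $\rho(Tu,Tv)w$ each show up in both the first and the second summand, while $-T(\rho(Tu,y)w)$ comes once from the first summand and once from the last). Once the degrees are corrected and the $q=0$ specialization of the $\circ$-formula is applied twice, the computation does close up and yields both identities; your remaining observations---that $T$ kills $\g$-components, that $\rho$ only produces $V$-components, and that the embedding tensor hypothesis is not actually used in this lemma---are all correct.
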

\begin{proof}
  It follows from straightforward computations.
\end{proof}

\begin{pro}
Let $(V ; \rho)$ be a representation of a $3$-Lie algebra $(\g, [\cdot,\cdot,\cdot]_{\g})$. Then we have a $V$-data $(L,F,\huaP,\Delta)$ as follows:
\begin{itemize}
\item[$\bullet$] the graded Lie algebra $(L,[\cdot,\cdot])$ is given by $(C^*(\g\oplus V,\g\oplus V),[\cdot,\cdot]_{\Le});$
\item[$\bullet$] the abelian graded Lie subalgebra $F$ is given by
$$
F=C^*(V,\g)=\oplus_{n\geq 0}C^{n}(V,\g)=\oplus_{n\geq 0}\Hom(\underbrace{\otimes^{2} V\otimes \cdots\otimes^{2}V}_{n}\otimes V, \g)
;$$
\item[$\bullet$] $\huaP:L\lon L$ is the projection onto the subspace $F;$
\item[$\bullet$]$\Delta=\mu\boxplus\rho.$
\end{itemize}
Consequently, we obtain a Lie $3$-algebra $(C^*(V,\g),\{\cdot,\cdot,\cdot\})$, where
the trilinear bracket operation $$\{\cdot,\cdot,\cdot\}: C^p(V,\g)\times C^q(V,\g)\times C^r(V,\g)\longrightarrow C^{p+q+r+1}(V,\g)$$  is defined by
\begin{eqnarray}\label{lie2-bracket}
\{P,Q,R\}=[[[\mu \boxplus \rho,P]_{\Le},Q]_{\Le},R]_{\Le},
\end{eqnarray}
for all $P \in C^{p}(V,\g), Q \in C^{q}(V,\g)$ and $R\in C^{r}(V,\g)$.
\end{pro}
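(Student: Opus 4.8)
The plan is to verify the four defining conditions of a $V$-data for the quadruple $(L,F,\huaP,\Delta)$, apply Voronov's Theorem~\ref{thm:db} to obtain an $L_\infty$-structure on $F=C^{\ast}(V,\g)$, and then use a weight count to see that only the trilinear bracket survives, so that this $L_\infty$-algebra is in fact a Lie $3$-algebra with bracket \eqref{lie2-bracket}. That $(L,[\cdot,\cdot])=\big(C^{\ast}(\g\oplus V,\g\oplus V),[\cdot,\cdot]_{\Le}\big)$ is a graded Lie algebra is Theorem~\ref{thm:graded-3-Lebiniz} applied to the vector space $\g\oplus V$. The subspace $F$ is embedded into $L$ by extension by zero: using the projection $\g\oplus V\to V$ in each argument and the inclusion $\g\hookrightarrow\g\oplus V$ in the target, a cochain on $V$ with values in $\g$ becomes a cochain on $\g\oplus V$ that only reads the $V$-components of its inputs and whose values lie in $\g$. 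With this convention $F$ is a graded subspace, and it is abelian: for $P,Q\in F$ every term of the circle product $P\circ Q$ feeds the $\g$-valued output of $Q$ into one component slot of $P$, and since $P$ only reads $V$-components and is multilinear such a term vanishes; hence $P\circ Q=0$, likewise $Q\circ P=0$, so $[P,Q]_{\Le}=0$ and $F$ is an abelian subalgebra. Finally $\huaP$ is the projection that retains exactly the $F$-component, so $\huaP^{2}=\huaP$ and $\Img\huaP=F$ are clear.

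The one condition requiring real work is that $\ker\huaP$ is a graded Lie subalgebra, and I would handle it by introducing a second grading on $L$, the \emph{weight}: decomposing $C^{n}(\g\oplus V,\g\oplus V)$ according to which of the $2n+1$ input slots reads a $\g$-component and which reads a $V$-component, and whether the output lands in $\g$ or $V$, I assign to such a homogeneous piece the weight $(\#\,V\text{-input slots})-(\#\,V\text{-output})$. For weight-homogeneous $P,Q$, the $V$-input slots of $P\circ Q$ are the $V$-slots of $P$ other than the one receiving the output of $Q$, together with all $V$-slots of $Q$, while the output type of $P\circ Q$ equals that of $P$; since the slot of $P$ that receives $Q$'s output is a $V$-slot exactly when $Q$ outputs into $V$, this gives $\mathrm{weight}(P\circ Q)=\mathrm{weight}(P)+\mathrm{weight}(Q)$, uniformly over the three families of terms in the circle product of Theorem~\ref{thm:graded-3-Lebiniz}, so the bracket $[\cdot,\cdot]_{\Le}$ adds weights. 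Within $C^{n}(\g\oplus V,\g\oplus V)$ the weight attains its maximum $2n+1$ exactly on the piece with all inputs in $V$ and output in $\g$, i.e.\ exactly on the degree-$n$ part $C^{n}(V,\g)$ of $F$; hence $\ker\huaP$ in degree $n$ is the sum of the pieces of weight $\le 2n$. Thus for $\xi\in\ker\huaP\cap C^{p}$ and $\eta\in\ker\huaP\cap C^{q}$ the bracket $[\xi,\eta]_{\Le}\in C^{p+q}$ has weight $\le 2p+2q<2(p+q)+1$, so it lies in $\ker\huaP$; the general case follows by bilinearity. The remaining checks are immediate: $\Delta=\mu\boxplus\rho\in C^{1}$ has weight $0$ on its $\mu$-piece (all inputs and output in $\g$) and weight $0$ on its $\rho$-piece (two $\g$-inputs, one $V$-input, $V$-output), so $\huaP\Delta=0$, i.e.\ $\Delta\in\ker(\huaP)^{1}$; and $[\Delta,\Delta]_{\Le}=[\mu\boxplus\rho,\mu\boxplus\rho]_{\Le}=0$ was observed just before the statement, $\mu\boxplus\rho$ being the hemisemidirect product $3$-Leibniz structure of Proposition~\ref{hemi-direct}. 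By Theorem~\ref{thm:db}, $(F,\{l_k\}_{k\ge1})$ with $l_k(a_1,\dots,a_k)=\huaP[\cdots[[\Delta,a_1]_{\Le},a_2]_{\Le},\dots,a_k]_{\Le}$ is an $L_\infty$-algebra.

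It then remains to see that this $L_\infty$-algebra is a Lie $3$-algebra, i.e.\ that $l_k=0$ for $k\ne 3$. For $a_i\in C^{p_i}(V,\g)$ the iterated bracket $[\cdots[[\Delta,a_1]_{\Le},a_2]_{\Le},\dots,a_k]_{\Le}$ lies in $C^{1+\sum p_i}$ and, by weight additivity with $\mathrm{weight}(\Delta)=0$ and $\mathrm{weight}(a_i)=2p_i+1$, is weight-homogeneous of weight $\sum_i(2p_i+1)=2\sum p_i+k$; but the $F$-piece of $C^{1+\sum p_i}$ has weight $2(1+\sum p_i)+1=2\sum p_i+3$, so $\huaP$ annihilates it unless $k=3$. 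Hence $l_k=0$ for $k\ne 3$, and $(C^{\ast}(V,\g),l_3)$ is a Lie $3$-algebra, the graded symmetry of $l_3$ being part of the conclusion of Theorem~\ref{thm:db}. Moreover for $k=3$ the iterated bracket $[[[\mu\boxplus\rho,P]_{\Le},Q]_{\Le},R]_{\Le}$ is already weight-homogeneous of weight $2(p+q+r)+3$, which is the maximal weight in $C^{1+p+q+r}$, so it already lies in $F$ and $\huaP$ acts as the identity on it; therefore $l_3(P,Q,R)$ equals the right-hand side of \eqref{lie2-bracket}. The main obstacle in all of this is the bookkeeping behind weight-additivity of the circle product: one must carefully trace how the $\otimes^{2}$-block structure of the cochains in Theorem~\ref{thm:graded-3-Lebiniz} interacts with the substitution of $Q$ into $P$ in each of its three families of terms; once that is settled, every other step is a short weight-and-degree count.
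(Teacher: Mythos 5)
Your proof is correct, and it follows the same skeleton as the paper's (set up the $V$-data, invoke Voronov's Theorem~\ref{thm:db}, then show that all derived brackets except the ternary one die), but the way you verify the key vanishing statements is genuinely different and, frankly, more complete. The paper establishes $[\mu\boxplus\rho,P]_{\Le}\in\ker\huaP$ and $[[\mu\boxplus\rho,P]_{\Le},Q]_{\Le}\in\ker\huaP$ by appealing to the explicit formulas of Lemma~\ref{lem-equation-1}, which are computed only for degree-zero cochains $T\in\Hom(V,\g)$, and then disposes of $l_k$ for $k\geq 4$ with a ``similarly''; it also takes the $V$-data axioms (abelianness of $F$, closedness of $\ker\huaP$ under $[\cdot,\cdot]_{\Le}$) for granted. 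Your auxiliary weight grading --- counting $V$-input slots minus a $V$-output --- proves additivity of weight under the circle product once and for all, and then every needed fact falls out of a one-line count: $F$ is abelian and is exactly the top-weight piece, $\ker\huaP$ is the sum of the lower-weight pieces and hence a subalgebra, $\Delta$ has weight zero, $l_k=0$ for $k\neq 3$ (indeed the iterated bracket itself vanishes for $k\geq 4$, matching the paper's claim), and for $k=3$ the iterated bracket already lies in $F$ so that $\huaP$ may be dropped from \eqref{lie2-bracket} --- a point the paper's definition of $\{\cdot,\cdot,\cdot\}$ silently relies on. The only place where care is genuinely required is the additivity of the weight across the three families of terms in the circle product of Theorem~\ref{thm:graded-3-Lebiniz}; your observation that the slot of $P$ receiving the output of $Q$ is a $V$-slot exactly when $Q$ outputs into $V$ is the correct way to see it, so the bookkeeping closes. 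In short: same route, but your Lemma (weight additivity) replaces the paper's Lemma~\ref{lem-equation-1} and buys uniformity over all cochain degrees at the cost of one combinatorial verification.
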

\begin{proof}
     By Theorem \ref{thm:db}, $(F,\{l_k\}_{k=1}^{+\infty})$ is an $L_\infty$-algebra, where $l_k$ is given by
\eqref{V-shla}. For all $P\in C^p(V,\g),Q\in C^q(V,\g)$ and $R\in C^r(V,\g)$, by Lemma \ref{lem-equation-1}, we have
\begin{eqnarray*}
&&[\mu\boxplus\rho,P]_{\Le}\in\ker(\mathcal{P}),\\
&&[[\mu\boxplus\rho,P]_{\Le},Q]_{\Le}\in\ker(\mathcal{P}).
\end{eqnarray*}
Similarly, we deduce that $l_k=0$ when $k\geq4$. Namely, the graded vector space $C^*(V,\g)$ is a Lie $3$-algebra with trilinear bracket operation $\{\cdot,\cdot,\cdot\}$ and other maps are trivial.
\end{proof}

\begin{thm}\label{thm:=ET1}
Let $(V ; \rho)$ be a representation of a $3$-Lie algebra $(\g, [\cdot,\cdot,\cdot]_{\g})$. Then Maurer-Cartan elements of the Lie $3$-algebra $(C^{\ast}(V,\g),\{\cdot,\cdot,\cdot\})$ are precisely embedding tensors on the $3$-Lie algebra  $(\g,[\cdot,\cdot,\cdot]_\g)$ with respect to the representation $(V;\rho)$.
\end{thm}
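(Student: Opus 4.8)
The plan is to show that for a linear map $T \in C^0(V,\g) = \Hom(V,\g)$, the Maurer-Cartan equation $\frac{1}{3!}\{T,T,T\} = 0$ of the Lie $3$-algebra $(C^*(V,\g),\{\cdot,\cdot,\cdot\})$ is equivalent to the embedding tensor condition \eqref{ET1}. Since the bracket is defined by $\{T,T,T\} = [[[\mu\boxplus\rho,T]_{\Le},T]_{\Le},T]_{\Le}$, the key is to compute this triple bracket explicitly using Lemma \ref{lem-equation-1}, which already gives us $[[\mu\boxplus\rho,T]_{\Le},T]_{\Le}$ in closed form.

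First I would take the expression for $[[\mu\boxplus\rho,T]_{\Le},T]_{\Le}$ from Lemma \ref{lem-equation-1} and apply $[\,\cdot\,,T]_{\Le}$ once more. Because $T$ maps $V$ to $\g$ and raises no slot count in the relevant way, bracketing with $T$ a third time substitutes $T$ into the remaining $\g$-arguments: each term of the form $[Tu,Tv,z]_\g$, $[Tu,y,Tw]_\g$, $[x,Tv,Tw]_\g$ contributes $[Tu,Tv,Tw]_\g$ (with appropriate multiplicity from the three ways of inserting), the term $-T(\rho(Tu,y)w + \rho(x,Tv)w)$ contributes $-2T(\rho(Tu,Tv)w)$, and $\rho(Tu,Tv)w$ lies in $V$ so bracketing it with $T$ (which only acts on $\g$-entries) produces nothing new in the leading term — more precisely, one tracks that the $V$-valued output $\rho(Tu,Tv)w$ gets no further $T$ applied since $T$ has no $V$-output to feed back. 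Carefully bookkeeping the combinatorial coefficients, I expect
\begin{eqnarray*}
\{T,T,T\}(u,v,w) = 6\Big([Tu,Tv,Tw]_\g - T\big(\rho(Tu,Tv)w\big)\Big),
\end{eqnarray*}
so that $\frac{1}{3!}\{T,T,T\} = 0$ becomes exactly $[Tu,Tv,Tw]_\g = T(\rho(Tu,Tv)w)$ for all $u,v,w \in V$, which is \eqref{ET1}.

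The main obstacle is the bookkeeping of signs and combinatorial multiplicities in iterating $[\cdot,\cdot]_{\Le}$, since $T$ sits in degree $0$ and the Rotkiewicz bracket on $C^*(\g\oplus V,\g\oplus V)$ involves shuffle sums; one must verify that the three insertions of $T$ into the three $\g$-slots of $\mu\boxplus\rho$ and the symmetrization inherent in the Lie $3$-bracket combine to give the clean coefficient $6$, and that all terms where $T$ would have to act on a $V$-argument vanish because $\Img(T) \subseteq \g$ and the $V$-component of $\mu\boxplus\rho$ only outputs into $V$ through $\rho$. A clean way to organize this is to note that $\frac{1}{3!}\{T,T,T\} = \frac{1}{3!}[[[\mu\boxplus\rho,T]_{\Le},T]_{\Le},T]_{\Le}$ and compare with the general Maurer-Cartan expansion: since $\Delta = \mu\boxplus\rho$ already satisfies $[\Delta,\Delta]_{\Le}=0$, one can alternatively argue via Lemma \ref{Getzler-th} that $\mu\boxplus\rho + T$ being "still a $3$-Leibniz structure of the right restricted type" is equivalent to $Gr(T)$ being a subalgebra, and then invoke Theorem \ref{semi-direct}; but the most transparent route remains the direct computation using Lemma \ref{lem-equation-1}, evaluating the final bracket on a generic triple $u,v,w \in V$ and reading off \eqref{ET1}.
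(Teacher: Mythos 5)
Your proposal follows exactly the paper's proof: expand $\{T,T,T\}=[[[\mu\boxplus\rho,T]_{\Le},T]_{\Le},T]_{\Le}$ on a triple $(u,v,w)\in V^{\otimes 3}$ using the closed form of $[[\mu\boxplus\rho,T]_{\Le},T]_{\Le}$ from Lemma \ref{lem-equation-1}, obtain $\{T,T,T\}(u,v,w)=6\bigl([Tu,Tv,Tw]_{\g}-T(\rho(Tu,Tv)w)\bigr)$, and conclude that the Maurer--Cartan equation is equivalent to \eqref{ET1}; this is precisely the paper's argument and your final formula is correct. One correction to your bookkeeping, though: the $V$-valued term $\rho(Tu,Tv)w$ in $[[\mu\boxplus\rho,T]_{\Le},T]_{\Le}$ \emph{does} get $T$ applied to it, via the summand $-T\bigl([[\mu\boxplus\rho,T]_{\Le},T]_{\Le}(u,v,w)\bigr)$ of the outermost derived bracket; this contributes $-2T(\rho(Tu,Tv)w)$, and without it the two substitutions into the first and second slots only account for $-4T(\rho(Tu,Tv)w)$ rather than the needed $-6$.
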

\begin{proof}
For a degree zero element $T: V\rightarrow \g$, by \eqref{lie2-bracket}, we have
\begin{eqnarray*}
&&\{T,T,T\}(u,v,w)\\
&=&[[[\mu \boxplus \rho,T]_{\Le},T]_{\Le},T]_{\Le}(u,v,w)\\
&=&[[\mu \boxplus \rho,T]_{\Le},T]_{\Le}(Tu,v,w)+[[\mu \boxplus \rho,T]_{\Le},T]_{\Le}(u,Tv,w)\\
&&+[[\mu \boxplus \rho,T]_{\Le},T]_{\Le}(u,v,Tw)-T[[\mu \boxplus \rho,T]_{\Le},T]_{\Le}(u,v,w)\\
&=&6\big([Tu,Tv,Tw]_{\g}-T(\rho(Tu,Tv)w\big),
\end{eqnarray*}
which indicates that a linear map $T \in \Hom(V, \g)$ is a Maurer-Cartan element of the Lie $3$-algebra $(C^{\ast}(V,\g),\{\cdot,\cdot,\cdot\})$ if and only if $T$ is an embedding tensor on the $3$-Lie algebra $(\g,[\cdot,\cdot,\cdot]_\g)$ with respect to the representation $(V;\rho)$.
\end{proof}

\begin{pro}
Let $(V ; \rho)$ be a representation of a $3$-Lie algebra $(\g, [\cdot,\cdot,\cdot]_{\g})$. Then the graded vector space $C^{\ast}(V,\g)$ carries a twisted $L_{\infty}$-algebra structure as following:
\begin{eqnarray}
\label{eq:=L1}l_1^{T}(P)&=&\frac{1}{2}\{T,T,P\},\\
\label{eq:=L2}l_2^{T}(P,Q)&=&\{T,P,Q\},\\
\label{eq:=L3}l_3^{T}(P,Q,R)&=&\{P,Q,R\},\\
\label{eq:=L4}l^{T}_k&=&0,\quad k\geq4,
\end{eqnarray}
where $P\in C^p(V,\g),Q\in C^q(V,\g)$ and $R\in C^r(V,\g)$.
\end{pro}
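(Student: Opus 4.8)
The plan is to deduce everything from Lemma \ref{Getzler-th} (Getzler's twisting procedure) together with Theorem \ref{thm:=ET1}. First I would observe that, by Theorem \ref{thm:=ET1}, an embedding tensor $T:V\rightarrow\g$ is exactly a Maurer-Cartan element of the Lie $3$-algebra $(C^{\ast}(V,\g),\{\cdot,\cdot,\cdot\})$, i.e. $\frac{1}{3!}\{T,T,T\}=0$. Lemma \ref{Getzler-th} then applies verbatim: twisting the Lie $3$-algebra $(C^{\ast}(V,\g),\{\cdot,\cdot,\cdot\})$ by the Maurer-Cartan element $T$ yields an $L_\infty$-algebra $(C^{\ast}(V,\g),l_1^{T},l_2^{T},l_3^{T})$ whose structure maps are, by the defining formula $l_k^{T}(x_1,\cdots,x_k)=\sum_{n\ge 0}\frac{1}{n!}l_{k+n}(\underbrace{T,\cdots,T}_{n},x_1,\cdots,x_k)$, given precisely by the displayed formulas \eqref{eq:=L1}--\eqref{eq:=L4}. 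So the content of the proposition is just unwinding the general twisting formula in the case where the only nonzero bracket is the trilinear one $l_3=\{\cdot,\cdot,\cdot\}$.

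The key steps are: (1) recall from the preceding proposition that $(C^{\ast}(V,\g),\{\cdot,\cdot,\cdot\})$ is a Lie $3$-algebra, so in the language of $L_\infty$-algebras it has $l_k=0$ for $k\neq 3$ and $l_3=\{\cdot,\cdot,\cdot\}$; (2) invoke Theorem \ref{thm:=ET1} to see $T\in C^0(V,\g)=\Hom(V,\g)$ is a Maurer-Cartan element; (3) apply Lemma \ref{Getzler-th}, and specialize the twisting formula. Since $l_{k+n}$ vanishes unless $k+n=3$, the sum $l_k^{T}(x_1,\cdots,x_k)=\sum_{n\ge 0}\frac{1}{n!}l_{k+n}(T^{n},x_1,\cdots,x_k)$ collapses: for $k=1$ only $n=2$ survives, giving $l_1^{T}(P)=\frac{1}{2!}l_3(T,T,P)=\frac{1}{2}\{T,T,P\}$; for $k=2$ only $n=1$ survives, giving $l_2^{T}(P,Q)=l_3(T,P,Q)=\{T,P,Q\}$; for $k=3$ only $n=0$ survives, giving $l_3^{T}(P,Q,R)=l_3(P,Q,R)=\{P,Q,R\}$; and for $k\ge 4$ every term has $k+n\ge 4>3$, so $l_k^{T}=0$. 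One should also note the bookkeeping that the trilinear bracket $\{\cdot,\cdot,\cdot\}$ has degree $1$ and raises the $C^\ast(V,\g)$-grading by one more than the sum of the degrees (cf. \eqref{lie2-bracket}), so that $l_1^T,l_2^T,l_3^T$ indeed have degree $1$ as required for an $L_\infty$-algebra; this is immediate from the grading in \eqref{lie2-bracket}.

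There is essentially no obstacle here beyond correct bookkeeping: the substantive work (that $(C^{\ast}(V,\g),\{\cdot,\cdot,\cdot\})$ is a Lie $3$-algebra, and that $\frac{1}{3!}\{T,T,T\}=0$ characterizes embedding tensors) has already been done in the preceding proposition and in Theorem \ref{thm:=ET1}. The only mild subtlety is to make sure the combinatorial factors $\frac{1}{n!}$ in the twisting formula reproduce the stated coefficients — in particular the $\frac12$ in \eqref{eq:=L1} — and that graded symmetry of $l_k^T$ is inherited from the graded symmetry of $\{\cdot,\cdot,\cdot\}$ established in the construction of the Lie $3$-algebra. I would therefore write the proof as: "By Theorem \ref{thm:=ET1}, $T$ is a Maurer-Cartan element of the Lie $3$-algebra $(C^{\ast}(V,\g),\{\cdot,\cdot,\cdot\})$. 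By Lemma \ref{Getzler-th}, twisting by $T$ gives an $L_\infty$-algebra $(C^{\ast}(V,\g),l_1^T,l_2^T,l_3^T)$, and since $l_k=0$ for $k\neq 3$ in this Lie $3$-algebra, the twisting formula immediately yields \eqref{eq:=L1}--\eqref{eq:=L4}."
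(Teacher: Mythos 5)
Your proposal is correct and follows exactly the paper's argument: the paper's proof likewise observes that $T$ is a Maurer-Cartan element of the Lie $3$-algebra $(C^{\ast}(V,\g),\{\cdot,\cdot,\cdot\})$ and then invokes Lemma \ref{Getzler-th}. Your additional unwinding of the twisting formula to recover the coefficients in \eqref{eq:=L1}--\eqref{eq:=L4} is a harmless (and helpful) elaboration of the same proof.
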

\begin{proof}
 Since $T$ is a Maurer-Cartan element of the Lie $3$-algebra  $(C^*(V,\g),\{\cdot,\cdot,\cdot\})$, by Lemma~\ref{Getzler-th}, the conclusion holds.
\end{proof}

\begin{thm}\label{thm:deformation}
Let $T:V\rightarrow\g$ be an embedding tensor on a $3$-Lie algebra $(\g,[\cdot,\cdot,\cdot]_\g)$ with respect to the representation $(V;\rho)$. Then for a linear map $T':V\rightarrow \g$, $T+T'$ is an embedding tensor if and only if $T'$ is a Maurer-Cartan element of the twisted $L_\infty$-algebra $\big(C^*(V,\g),l_1^{T},l_2^{T},l_3^{T}\big)$, that is $T'$ satisfies the Maurer-Cartan equation:
$$
l_1^{T}(T')+\frac{1}{2}l_2^{T}(T',T')+\frac{1}{3!}l_3^{T}(T',T',T')=0.
$$
\end{thm}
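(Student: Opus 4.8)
The plan is to deduce this as an immediate consequence of the Maurer--Cartan characterization in Theorem~\ref{thm:=ET1} together with Getzler's twisting procedure recalled in Lemma~\ref{Getzler-th}, so that essentially no new computation is required. First I would observe that, since $T$ and $T'$ are linear maps $V\to\g$, they are degree~$0$ elements of $C^\ast(V,\g)$, and by Theorem~\ref{thm:=ET1} a degree~$0$ element $S$ of $C^\ast(V,\g)$ is an embedding tensor on $(\g,[\cdot,\cdot,\cdot]_\g)$ with respect to $(V;\rho)$ precisely when it is a Maurer--Cartan element of the Lie $3$-algebra $(C^\ast(V,\g),\{\cdot,\cdot,\cdot\})$, i.e. $\tfrac{1}{3!}\{S,S,S\}=0$. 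Taking $S=T+T'$ reduces the statement to: $T+T'$ satisfies the Maurer--Cartan equation of the Lie $3$-algebra if and only if $T'$ satisfies the displayed equation.

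Next, because $T$ is itself an embedding tensor, it is a Maurer--Cartan element of $(C^\ast(V,\g),\{\cdot,\cdot,\cdot\})$, so Lemma~\ref{Getzler-th} applies verbatim: $T+T'$ is a Maurer--Cartan element of the Lie $3$-algebra if and only if $T'$ is a Maurer--Cartan element of the twisted $L_\infty$-algebra $(C^\ast(V,\g),l_1^T,l_2^T,l_3^T)$ obtained by twisting with $T$, whose structure maps are exactly those in \eqref{eq:=L1}--\eqref{eq:=L4}. Since $l_k^T=0$ for all $k\geq 4$, the Maurer--Cartan equation \eqref{MC-equationL} for $T'$ truncates to the finite sum $l_1^T(T')+\tfrac{1}{2}l_2^T(T',T')+\tfrac{1}{3!}l_3^T(T',T',T')=0$, which is the asserted equation; this completes the argument.

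If one prefers to avoid invoking Lemma~\ref{Getzler-th}, the same conclusion follows by a short direct expansion, which is the only place any computation enters. Using trilinearity and the graded symmetry \eqref{Lie-3-alg-1} (and that $T,T'$ are both of degree $0$, hence the bracket is fully symmetric in them), one gets
$$
\{T+T',T+T',T+T'\}=\{T,T,T\}+3\{T,T,T'\}+3\{T,T',T'\}+\{T',T',T'\}.
$$
Since $\{T,T,T\}=0$ and, by \eqref{eq:=L1}--\eqref{eq:=L3}, $l_1^T(T')=\tfrac12\{T,T,T'\}$, $l_2^T(T',T')=\{T,T',T'\}$, $l_3^T(T',T',T')=\{T',T',T'\}$, dividing by $3!$ yields $\tfrac{1}{3!}\{T+T',T+T',T+T'\}=l_1^T(T')+\tfrac12 l_2^T(T',T')+\tfrac{1}{3!}l_3^T(T',T',T')$, and the equivalence is immediate. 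The only real obstacle is purely bookkeeping: making sure the multiplicities $1,3,3,1$ reconcile with the normalizations $\tfrac12$, $1$, $1$ built into $l_1^T,l_2^T,l_3^T$; beyond that, nothing is needed except Theorem~\ref{thm:=ET1} and the fact that $T$ is a Maurer--Cartan element.
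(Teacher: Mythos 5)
Your proposal is correct and follows essentially the same route as the paper: reduce via Theorem~\ref{thm:=ET1} to the Maurer--Cartan equation $\tfrac{1}{3!}\{T+T',T+T',T+T'\}=0$, expand using $\{T,T,T\}=0$ and the symmetry of the bracket, and match the resulting terms $\tfrac12\{T,T,T'\}+\tfrac12\{T,T',T'\}+\tfrac16\{T',T',T'\}$ against the definitions \eqref{eq:=L1}--\eqref{eq:=L3}. Your alternative shortcut via Lemma~\ref{Getzler-th} is also valid (indeed that lemma is recalled in the paper for exactly this purpose), but it does not constitute a genuinely different argument.
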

\begin{proof}
  By Theorem \ref{thm:=ET1}, $T+T'$ is an embedding tensor if and only if
  $$\frac{1}{3!}\{T+T',T+T',T+T'\}=0.$$
By $\{T,T,T\}=0,$ the above equality is equivalent to
  $$\frac{1}{2}\{T,T,T'\}+\frac{1}{2}\{T,T',T'\}+\frac{1}{6}\{T',T',T'\}=0,$$
which implies that $l_1^{T}(T')+\frac{1}{2}l_2^{T}(T',T')+\frac{1}{3!}l_3^{T}(T',T',T')=0.$
Namely, $T'$ is a Maurer-Cartan element of the twisted $L_\infty$-algebra $\big(C^*(V,\g),l_1^{T},l_2^{T},l_3^{T}\big)$.
\end{proof}

\section{The cohomology of embedding tensors}\label{cohomology}
In this section, first we recall some basic
results involving representations and cohomologies of $3$-Leibniz algebra $(\mathcal{L},[\cdot,\cdot,\cdot]_{\mathcal{L}})$.
We construct a representation of the $3$-Leibniz algebra $(V;[\cdot,\cdot,\cdot]_{T})$ on the vector space $\g$ and define the cohomologies of an embedding tensor
on $3$-Lie algebras.

\begin{defi}
A {\bf representation} of a $3$-Leibniz algebra $(\mathcal{L},[\cdot,\cdot,\cdot]_{\huaL})$ is a quadruple $(V; l,m,r)$, where $V$ is a vector space, $l, m, r:\otimes^{2}\mathcal{L}\rightarrow \gl(V)$ are linear maps, such that for all $x_{i} \in \mathcal{L}, i=1,\cdots, 4$, the following equalities hold:
\begin{eqnarray}
\label{eq:=3LeibRep1}l(x_{1},x_{2})l(x_{3},x_{4})&=&l([x_{1},x_{2},x_{3}]_{\huaL},x_{4})+l(x_{3},[x_{1},x_{2},x_{4}]_{\huaL})+l(x_{3},x_{4})l(x_{1},x_{2}),\\
\label{eq:=3LeibRep2}l(x_{1},x_{2})m(x_{3},x_{4})&=&m([x_{1},x_{2},x_{3}]_{\huaL},x_{4})+m(x_{3},[x_{1},x_{2},x_{4}]_{\huaL})+m(x_{3},x_{4})l(x_{1},x_{2}),\\
\label{eq:=3LeibRep3}l(x_{1},x_{2})r(x_{3},x_{4})&=&r([x_{1},x_{2},x_{3}]_{\huaL},x_{4})+r(x_{3},[x_{1},x_{2},x_{4}]_{\huaL})+r(x_{3},x_{4})l(x_{1},x_{2}),\\
\label{eq:=3LeibRep4}m(x_{1},[x_{2},x_{3},x_{4}]_{\huaL})&=&r(x_{3},x_{4})m(x_{1},x_{2})+m(x_{2},x_{4})m(x_{1},x_{3})+l(x_{2},x_{3})m(x_{1},x_{4}),\\
\label{eq:=3LeibRep5}r(x_{1},[x_{2},x_{3},x_{4}]_{\huaL})&=&r(x_{3},x_{4})r(x_{1},x_{2})+m(x_{2},x_{4})r(x_{1},x_{3})+l(x_{2},x_{3})r(x_{1},x_{4}).
\end{eqnarray}
\end{defi}
An $n$-cochain on a 3-Leibniz algebra $(\mathcal{L},[\cdot,\cdot,\cdot]_{\mathcal{L}})$ with coefficients in a representation $(V; l,m,r)$ is a linear map
\begin{equation*}
f:\underbrace{((\otimes^{2}\huaL)\otimes\cdots\otimes(\otimes^{2}\mathcal{L}))}_{n-1}\otimes\mathcal{L}\longrightarrow V,\quad (n\geq1).
\end{equation*}
Denote the space of $n$-cochains by $\frkC_{\Le}^{n}(\mathcal{L};V)$. The coboundary operator $\partial:\frkC_{\Le}^{n}(\mathcal{L};V)\rightarrow \frkC_{\Le}^{n+1}(\mathcal{L};V)$ is given by
\begin{eqnarray*}
&&(\partial f)(X_{1},X_{2},\cdots,X_{n},z)\\
&=&\sum_{1\leq j<k\leq n}(-1)^{j}f(X_{1},\cdots,\hat{X_{j}},\cdots,X_{k-1},[x_{j},y_{j},x_{k}]_{\huaL}\otimes y_{k},X_{k+1},\cdots,X_{n},z)\\
&&+\sum_{1\leq j<k\leq n}(-1)^{j}f(X_{1},\cdots,\hat{X_{j}},\cdots,X_{k-1},x_{k}\otimes[x_{j},y_{j},y_{k}]_{\huaL},X_{k+1},\cdots,X_{n},z)\\
&&+\sum_{j=1}^{n}\Big((-1)^{j}f(X_{1},\cdots,\hat{X_{j}},\cdots,X_{n},[X_{j},z]_{\huaL})+(-1)^{j+1}l(X_{j})f(X_{1},\cdots,\hat{X_{j}},\cdots,X_{n},z)\Big)\\
&&+(-1)^{n+1}\Big(m(x_{n},z)f(X_{1},\cdots,X_{n-1},y_{n})+r(y_{n},z)f(X_{1},\cdots,X_{n-1},x_{n})\Big),
\end{eqnarray*}
where $X_{i}=x_{i}\otimes y_{i}\in \otimes^{2}\mathcal{L}, i=1, 2, \cdots, n$ and $z\in \mathcal{L}$. It was proved in \cite{Casas,Takhtajan1} that $\partial\circ \partial=0.$ Thus, $(\oplus_{n=1}^{+\infty}\frkC_{\Le}^{n}(\mathcal{L};V),\partial)$ is a cochain complex.

\begin{defi}
The {\bf cohomology} of the $3$-Leibniz algebra $(\mathcal{L},[\cdot,\cdot,\cdot]_{\mathcal{L}})$ with coefficients in $V$ is the cohomology of the cochain complex
$(\oplus_{n=1}^{+\infty}\frkC_{\Le}^{n}(\mathcal{L};V),\partial)$.  We denote the set of $n$-cocycles by $Z^n_{\Le}(\mathcal{L};V)$,  the set of $n$-coboundaries by $B^n_{\Le}(\mathcal{L};V)$ and the $n$-th cohomology group by  \begin{equation}\label{eq:cohomology}
H^n_{\Le}(\mathcal{L};V)=Z^n_{\Le}(\mathcal{L};V)/B^n_{\Le}(\mathcal{L};V).
\end{equation}
\end{defi}

\begin{lem}
  Let $T:V\rightarrow \g$ be an embedding tensor on the $3$-Lie algebra $(\g,[\cdot,\cdot,\cdot]_\g)$ with respect to the representation $(V;\rho)$.  Define linear maps $l_{T}, m_{T}, r_{T}:\otimes^{2}V\longrightarrow \gl(\g)$ by
  \begin{eqnarray}
\label{eq:=3LeibRep6}l_{T}(u,v)(x)&:=&[Tu, Tv, x]_{\g},\\
\label{eq:=3LeibRep7}m_{T}(u,v)(x)&:=&[Tu, x, Tv]_{\g}-T(\rho(Tu,x)v),\\
\label{eq:=3LeibRep8}r_{T}&:=&-m_{T},
\end{eqnarray}
for all $u,v \in V, x\in \g.$ Then $(\g;l_{T},m_{T},r_{T})$ is a representation of the $3$-Leibniz algebra $(V, [\cdot,\cdot,\cdot]_{T})$.
\end{lem}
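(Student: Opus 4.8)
The plan is to check directly the five identities \eqref{eq:=3LeibRep1}--\eqref{eq:=3LeibRep5} defining a representation, taking the $3$-Leibniz algebra to be $(V,[\cdot,\cdot,\cdot]_{T})$ with $[u,v,w]_{T}=\rho(Tu,Tv)w$ as in \eqref{ET2}, coefficients in $\g$, and evaluating each identity on an arbitrary $x\in\g$. A first reduction: since $r_{T}=-m_{T}$ while $l_{T}$ is built only from $\rho$, identity \eqref{eq:=3LeibRep3} is exactly $(-1)$ times \eqref{eq:=3LeibRep2}, and \eqref{eq:=3LeibRep5} is exactly $(-1)$ times \eqref{eq:=3LeibRep4}. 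Hence it suffices to establish \eqref{eq:=3LeibRep1}, \eqref{eq:=3LeibRep2} and \eqref{eq:=3LeibRep4}.

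The engine of the computation is the embedding tensor equation \eqref{ET1}, used in two guises: $[Tu,Tv,T(\cdot)]_{\g}=T\big(\rho(Tu,Tv)(\cdot)\big)$, and $T([u,v,w]_{T})=[Tu,Tv,Tw]_{\g}$. For \eqref{eq:=3LeibRep1} one expands both sides applied to $x$, rewrites every $T([u_i,u_j,u_k]_{T})$ as $[Tu_i,Tu_j,Tu_k]_{\g}$ via \eqref{ET1}, and observes that the resulting identity is precisely the Fundamental Identity \eqref{eq:=3-lie} for the quintuple $(Tu_1,Tu_2,Tu_3,Tu_4,x)$ together with \eqref{ET1} again. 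This case is short.

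For \eqref{eq:=3LeibRep2} and \eqref{eq:=3LeibRep4} -- where the real work lies -- the key structural observation is that each occurrence of $m_{T}$ is a sum of a ``$3$-Lie'' part of shape $[T\cdot,\cdot,T\cdot]_{\g}$ and a ``$T\rho$'' part of shape $T\big(\rho(T\cdot,\cdot)\cdot\big)$. After expanding both sides on $x$ and normalizing every summand into one of these two shapes using \eqref{ET1} (which both slides $T$ past a bracket and, crucially, eliminates the auxiliary $T\rho$-terms that arise inside brackets via pairwise cancellation), each side becomes a sum of pure $3$-Lie terms plus a sum of $T\big(\rho(\cdots)u\big)$ terms. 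The $3$-Lie parts are then matched using the Fundamental Identity \eqref{eq:=3-lie} together with the skew-symmetry of $[\cdot,\cdot,\cdot]_{\g}$, while the $\rho$-parts are matched using \eqref{rep1:=3-lie} in the case of \eqref{eq:=3LeibRep2}, and using a combination of \eqref{rep1:=3-lie}, \eqref{rep2:=3-lie} and the antisymmetry of $\rho$ in the case of \eqref{eq:=3LeibRep4}.

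The main obstacle is entirely organizational: carrying out the term-by-term bookkeeping in \eqref{eq:=3LeibRep4}, where exhibiting the required cancellation among the $\rho(T\cdot,\cdot)$-operators forces one to invoke both representation axioms and the antisymmetry of $\rho$ at once. There is no conceptual difficulty -- once \eqref{ET1} has been used to put every summand into ``$3$-Lie'' or ``$T\rho$'' form, the identities reduce to matching like with like. (One may also regard this lemma as precisely the statement that makes the cohomology of $T$, defined below, well posed, since it is the content underlying the differential $l_1^{T}=\tfrac12\{T,T,\cdot\}$ of the twisted $L_\infty$-algebra squaring to zero on $C^*(V,\g)$.)
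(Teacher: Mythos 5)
Your proposal is correct and follows essentially the same route as the paper: a direct verification of \eqref{eq:=3LeibRep1}, \eqref{eq:=3LeibRep2} and \eqref{eq:=3LeibRep4} by expanding everything on $x$, using the embedding tensor identity \eqref{ET1} to normalize summands and cancel the mixed terms, and then invoking the Fundamental Identity \eqref{eq:=3-lie} together with the representation axiom \eqref{rep1:=3-lie} (the paper's computation of \eqref{eq:=3LeibRep4} in fact never needs \eqref{rep2:=3-lie}, only \eqref{rep1:=3-lie}, skew-symmetry of the bracket and antisymmetry of $\rho$). Your only departure is the tidy observation that, since $r_{T}=-m_{T}$ and each axiom is linear in each of the operators $l,m,r$, identities \eqref{eq:=3LeibRep3} and \eqref{eq:=3LeibRep5} are formally $(-1)$ times \eqref{eq:=3LeibRep2} and \eqref{eq:=3LeibRep4}; the paper disposes of these two cases with a ``similarly,'' so this is a small gain in economy rather than a genuinely different method.
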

\begin{proof}
For all $u_{1},u_{2},u_{3},u_{4} \in V$ and $x \in \g$, by \eqref{eq:=3-lie}, we have
\begin{eqnarray*}
&&\big(l_{T}([u_{1},u_{2},u_{3}]_T,u_{4})+l_{T}(u_{3},[u_{1},u_{2},u_{4}]_T)+l_{T}(u_{3},u_{4})l_{T}(u_{1},u_{2})\big)(x)\\
&=&[T[u_{1},u_{2},u_{3}]_{T},Tu_{4},x]_{\g}+[Tu_{3},T[u_{1},u_{2},u_{4}]_{T},x]_{\g}+[Tu_{3},Tu_{4},[Tu_{1},Tu_{2},x]_{\g}]_{\g}\\
&=&[[Tu_{1},Tu_{2},Tu_{3}]_{\g},Tu_{4},x]_{\g}+[Tu_{3},[Tu_{1},Tu_{2},Tu_{4}]_{\g},x]_{\g}+[Tu_{3},Tu_{4},[Tu_{1},Tu_{2},x]_{\g}]_{\g}\\
&=&[Tu_{1},Tu_{2},[Tu_{3},Tu_{4},x]_{\g}]_{\g}\\
&=&l_{T}(u_{1},u_{2})([Tu_{3},Tu_{4},x]_{\g})\\
&=&l_{T}(u_{1},u_{2})l_{T}(u_{3},u_{4})(x),
\end{eqnarray*}
which implies that \eqref{eq:=3LeibRep1} holds.

By \eqref{eq:=3-lie}, \eqref{rep1:=3-lie}, \eqref{ET1} and Proposition \ref{ET}, for all $u_{1},u_{2},u_{3},u_{4} \in V$ and $x \in \g$, we have
\begin{eqnarray*}
&&m_{T}(u_{3},u_{4})l_{T}(u_{1},u_{2})(x)+m_{T}([u_{1},u_{2},u_{3}]_T,u_{4})(x)+m_{T}(u_{3},[u_{1},u_{2},u_{4}]_T)(x)\\
&=&m_{T}(u_{3},u_{4}) ([Tu_{1},Tu_{2},x]_{\g})+[T[u_{1},u_{2},u_{3}]_T,x,Tu_{4}]_{\g}-T\big(\rho(T[u_{1},u_{2},u_{3}]_T,x)u_{4}\big)\\
&&+[Tu_{3},x,T[u_{1},u_{2},u_{4}]_T]_{\g}-T\big(\rho(Tu_{3},x)([u_{1},u_{2},u_{4}]_T)\big)\\
&=&[Tu_{3},[Tu_{1},Tu_{2},x]_{\g},Tu_{4}]_{\g}-T\big(\rho(Tu_{3},[Tu_{1},Tu_{2},x]_{\g})u_{4}\big)+[[Tu_{1},Tu_{2},Tu_{3}]_{\g},x,Tu_{4}]_{\g}\\
&&-T\big(\rho([Tu_{1},Tu_{2},Tu_{3}]_{\g},x)u_{4}\big)+[Tu_{3},x,[Tu_{1},Tu_{2},Tu_{4}]_{\g}]_{\g}-T\big(\rho(Tu_{3},x)\rho(Tu_{1},Tu_{2})u_{4}\big)\\
&=&[Tu_{1},Tu_{2},[Tu_{3},x,Tu_{4}]_{\g}]_{\g}-T\big(\rho(Tu_{1},Tu_{2})\rho(Tu_{3},x)u_{4}\big)\\
&=&[Tu_{1},Tu_{2},[Tu_{3},x,Tu_{4}]_{\g}]_{\g}-[Tu_{1},Tu_{2},T(\rho(Tu_{3},x)u_{4})]_{\g}\\
&=&l_{T}(u_{1},u_{2})\big([Tu_{3},x,Tu_{4}]_{\g}-T(\rho(Tu_{3},x)u_{4})\big)\\
&=&l_{T}(u_{1},u_{2})m_{T}(u_{3},u_{4})(x),
\end{eqnarray*}
which means that \eqref{eq:=3LeibRep2} holds. Similarly, we can prove that \eqref{eq:=3LeibRep3} is true.

For all $u_{1},u_{2},u_{3},u_{4} \in V$ and $x \in \g$, we get
\begin{eqnarray*}
&&r_{T}(u_{3},u_{4})m_{T}(u_{1},u_{2})(x)+m_{T}(u_{2},u_{4})m_{T}(u_{1},u_{3})(x)+l_{T}(u_{2},u_{3})m_{T}(u_{1},u_{4})(x)\\
&=&r_{T}(u_{3},u_{4})\big([Tu_{1},x,Tu_{2}]_{\g}-T(\rho(Tu_{1},x)u_{2})\big)+m_{T}(u_{2},u_{4})\big([Tu_{1},x,Tu_{3}]_{\g}-T(\rho(Tu_{1},x)u_{3})\big)\\
&&+l_{T}(u_{2},u_{3})\big([Tu_{1},x,Tu_{4}]_{\g}-T(\rho(Tu_{1},x)u_{4})\big)\\
&=&[[Tu_{1},x,Tu_{2}]_{\g},Tu_{3},Tu_{4}]_{\g}-T\big(\rho([Tu_{1},x,Tu_{2}]_{\g},Tu_{3})u_{4}\big)-[T(\rho(Tu_{1},x)u_{2}),Tu_{3},Tu_{4}]_{\g}\\
&&+T\big(\rho(T(\rho(Tu_{1},x)u_{2}),Tu_{3})u_{4}\big)+[Tu_{2},[Tu_{1},x,Tu_{3}]_{\g},Tu_{4}]_{\g}-T\big(\rho(Tu_{2},[Tu_{1},x,Tu_{3}]_{\g})u_{4}\big)\\
&&-[Tu_{2},T(\rho(Tu_{1},x)u_{3}),Tu_{4}]_{\g}+T\big(\rho(Tu_{2},T(\rho(Tu_{1},x)u_{3}))u_{4}\big)+[Tu_{2},Tu_{3},[Tu_{1},x,Tu_{4}]_{\g}]_{\g}\\
&&-[Tu_{2},Tu_{3},T(\rho(Tu_{1},x)u_{4})]_{\g}\\
&=&[[Tu_{1},x,Tu_{2}]_{\g},Tu_{3},Tu_{4}]_{\g}+[Tu_{2},[Tu_{1},x,Tu_{3}]_{\g},Tu_{4}]_{\g}+[Tu_{2},Tu_{3},[Tu_{1},x,Tu_{4}]_{\g}]_{\g}\\
&&-T\big(\rho(Tu_{2},[Tu_{1},x,Tu_{3}]_{\g})u_{4}\big)-T\big(\rho([Tu_{1},x,Tu_{2}]_{\g},Tu_{3})u_{4}\big)-[Tu_{2},Tu_{3},T(\rho(Tu_{1},x)u_{4})]_{\g}\\
&=&[Tu_{1},x,[Tu_{2},Tu_{3},Tu_{4}]_{\g}]_{\g}-T\big(\rho(Tu_{2},[Tu_{1},x,Tu_{3}]_{\g})u_{4}\big)-T\big(\rho([Tu_{1},x,Tu_{2}]_{\g},Tu_{3})u_{4}\big)\\
&&-T\big(\rho(Tu_{2},Tu_{3})\rho(Tu_{1},x)u_{4}\big)\\
&=&[Tu_{1},x,[Tu_{2},Tu_{3},Tu_{4}]_{\g}]_{\g}-T\big(\rho(Tu_{1},x)\rho(Tu_{2},Tu_{3})u_{4}\big)\\
&=&[Tu_{1},x,T[u_{2},u_{3},u_{4}]_T]_{\g}-T\big(\rho(Tu_{1},x)([u_{2},u_{3},u_{4}]_T)\big)\\
&=&m_{T}(u_{1},[u_{2},u_{3},u_{4}]_T)(x),
\end{eqnarray*}
which indicates that \eqref{eq:=3LeibRep4} holds. Similarly, we can show that  \eqref{eq:=3LeibRep5} holds. Therefore, $(\g;l_{T},m_{T},r_{T})$ is a representation of the $3$-Leibniz algebra $(V, [\cdot,\cdot,\cdot]_{T})$.
\end{proof}

Let $\partial_{T}:\frkC_{\Le}^{n}(V;\g)\rightarrow  \frkC_{\Le}^{n+1}(V;\g),~(n\geq 1)$ be the coboundary operator of the 3-Leibniz algebra $(V, [\cdot,\cdot,\cdot]_{T})$ with coefficients in the representation $(\g;l_{T},m_{T},r_{T})$. More precisely, for all $\theta \in \Hom (\underbrace{(\otimes^{2} V)\otimes \cdots\otimes (\otimes^{2}V}_{n-1})\otimes V,\g)$, $U_i=u_i\otimes v_i\in \otimes^2V,~ i=1,2,\cdots,n$ and $w\in V,$
we have
\begin{eqnarray}
\label{eq:=cohoET}&&\partial_{T}\theta(U_{1},U_{2},\cdots,U_{n},w)\\
\nonumber&=&\sum_{1\leq j<k\leq n}(-1)^{j}\theta(U_{1},\cdots,\hat{U_{j}},\cdots,U_{k-1},[u_{j},v_{j},u_{k}]_{T}\otimes v_{k},U_{k+1},\cdots,U_{n},w)\\
\nonumber&&+\sum_{1\leq j<k\leq n}(-1)^{j}\theta(U_{1},\cdots,\hat{U_{j}},\cdots,U_{k-1},u_{k}\otimes[u_{j},v_{j},v_{k}]_{T},U_{k+1},\cdots,U_{n},w)\\
\nonumber&&+\sum_{j=1}^{n}\Big((-1)^{j}\theta(U_{1},\cdots,\hat{U_{j}},\cdots,U_{n},[U_{j},w]_{T})+(-1)^{j+1}l_{T}(U_{j})\theta(U_{1},\cdots,\hat{U_{j}},\cdots,U_{n},w)\Big)\\
\nonumber&&+(-1)^{n+1}\Big(m_{T}(u_{n},w)\theta(U_{1},\cdots,U_{n-1},v_{n})+r_{T}(v_{n},w)\theta(U_{1},\cdots,U_{n-1},u_{n})\Big).
\end{eqnarray}

It is obvious that $\theta \in \Hom(V,\g)$ is closed if and only if
\begin{eqnarray*}
&&[Tu,Tv,\theta w]_{\g}+[\theta u,Tv,Tw]_{\g}+[Tu,\theta v,Tw]_{\g}\\
&=&T(\rho(\theta u,Tv)w)+T(\rho(Tu,\theta v)w)+\theta(\rho(Tu,Tv)w).
\end{eqnarray*}

For any $\mathfrak{X} \in \g\wedge\g, v \in V$, define $\delta(\mathfrak{X}):V\longrightarrow \g$ by
\begin{eqnarray}\label{0-cochain}
\delta(\mathfrak{X})v=T\rho(\mathfrak{X})v-[\mathfrak{X},Tv]_{\g}.
\end{eqnarray}
\begin{pro}\label{cocycle-1-em}
Let $T$ be an embedding tensor on the $3$-Lie algebra $(\g,[\cdot,\cdot,\cdot]_\g)$ with respect to the representation $(V;\rho)$. Then $\delta(\mathfrak{X})$ is a $1$-cocycle on the $3$-Leibniz algebra $(V,[\cdot,\cdot,\cdot]_{T})$ with coefficients in $(\g;l_{T},m_{T},r_{T})$.
\end{pro}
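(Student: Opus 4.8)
The plan is to reduce at once to the case $\mathfrak{X}=x\wedge y$ with $x,y\in\g$ — both $\delta$ and $\partial_T$ are linear in $\mathfrak{X}$ — and then to verify directly that the $1$-cochain $\theta:=\delta(x\wedge y)$ satisfies $\partial_T\theta=0$. For this I would use the explicit description of closed $1$-cochains recorded just before the statement: $\theta\in\Hom(V,\g)$ is a $1$-cocycle if and only if
\[
[Tu,Tv,\theta w]_{\g}+[\theta u,Tv,Tw]_{\g}+[Tu,\theta v,Tw]_{\g}=T\big(\rho(\theta u,Tv)w\big)+T\big(\rho(Tu,\theta v)w\big)+\theta\big(\rho(Tu,Tv)w\big)
\]
for all $u,v,w\in V$. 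It is convenient to abbreviate $D:=\ad_{x,y}\in\gl(\g)$ and $\bar D:=\rho(x,y)\in\gl(V)$, so that by \eqref{0-cochain} one has $\theta a=T\bar D a-D(Ta)$ for every $a\in V$.

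Two structural identities will do all the work. First, the Fundamental Identity \eqref{eq:=3-lie} says precisely that $D$ is a derivation of $[\cdot,\cdot,\cdot]_{\g}$, i.e. $D[a,b,c]_{\g}=[Da,b,c]_{\g}+[a,Db,c]_{\g}+[a,b,Dc]_{\g}$. Second, the representation axiom \eqref{rep1:=3-lie} with $(x_{1},x_{2})=(x,y)$ gives $\bar D\,\rho(a,b)-\rho(a,b)\,\bar D=\rho([x,y,a]_{\g},b)+\rho(a,[x,y,b]_{\g})$, equivalently $\bar D\big(\rho(a,b)v\big)=\rho(Da,b)v+\rho(a,Db)v+\rho(a,b)\bar D v$. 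Together with the defining equation \eqref{ET1} of the embedding tensor, these are the only inputs needed.

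I would then substitute $\theta a=T\bar D a-D(Ta)$ into both sides of the displayed cocycle condition. On the left, after expanding, the three terms carrying $T\bar D$ collapse by \eqref{ET1} to $T\rho(Tu,Tv)\bar D w$, $T\rho(T\bar D u,Tv)w$, $T\rho(Tu,T\bar D v)w$, while the three terms carrying $D$ assemble, via the derivation property of $D$ and then \eqref{ET1}, into $-D\big([Tu,Tv,Tw]_{\g}\big)=-D\big(T\rho(Tu,Tv)w\big)$. On the right, expanding $\rho(\theta u,Tv)$ and $\rho(Tu,\theta v)$ by bilinearity of $\rho$ and using $\theta\big(\rho(Tu,Tv)w\big)=T\bar D\big(\rho(Tu,Tv)w\big)-D\big(T\rho(Tu,Tv)w\big)$ (applying \eqref{ET1} inside the last summand), the terms $T\rho(T\bar D u,Tv)w$, $T\rho(Tu,T\bar D v)w$ and $-D\big(T\rho(Tu,Tv)w\big)$ cancel against their counterparts on the left. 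What remains is the single identity $T\rho(Tu,Tv)\bar D w=T\bar D\big(\rho(Tu,Tv)w\big)-T\rho(DTu,Tv)w-T\rho(Tu,DTv)w$, which is exactly the second structural identity above applied to the triple $Tu,Tv,w$ and pushed through $T$; this closes the verification. The only real difficulty is the organized bookkeeping of the roughly dozen terms — there is no conceptual obstacle, since morally $\theta=\delta(\mathfrak{X})$ is the first-order variation of $T$ under the one-parameter family generated by the inner derivation $\ad_{x,y}$ paired with $\rho(x,y)$, hence is forced to be a Maurer–Cartan tangent vector, i.e. a $1$-cocycle, in view of Theorem \ref{thm:deformation}; the direct computation above simply realizes this explicitly inside the $3$-Leibniz cochain complex $(\oplus_{n\geq1}\frkC_{\Le}^{n}(V;\g),\partial_{T})$.
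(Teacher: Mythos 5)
Your proposal is correct and follows essentially the same route as the paper: both expand $\delta(\mathfrak{X})a=T\rho(\mathfrak{X})a-[\mathfrak{X},Ta]_{\g}$ in the explicit $1$-cocycle condition and cancel terms using the Fundamental Identity (i.e.\ that $\ad_{\mathfrak{X}}$ is a derivation), the representation axiom \eqref{rep1:=3-lie}, and the embedding tensor identity \eqref{ET1}. Your bookkeeping (the residual identity $T\bar D\rho(Tu,Tv)w=T\rho(DTu,Tv)w+T\rho(Tu,DTv)w+T\rho(Tu,Tv)\bar Dw$) checks out, so the verification is complete.
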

\begin{proof}
For any $u,v,w \in V$, by \eqref{eq:=3-lie}, \eqref{rep1:=3-lie} and the fact that T is an embedding tensor, we have
\begin{eqnarray*}
(\partial_{T}\delta(\mathfrak{X}))(u,v,w)&=&-\delta(\mathfrak{X})([u,v,w]_{T})+[Tu,Tv,\delta(\mathfrak{X})w]_{\g}+[\delta(\mathfrak{X})u,Tv,Tw]_{\g}\\
&&-T(\rho(\delta(\mathfrak{X})u,Tv)w)+[Tu,\delta(\mathfrak{X})v,Tw]_{\g}-T(\rho(Tu,\delta(\mathfrak{X})v)w)\\
&=&-T\rho(\mathfrak{X})([u,v,w]_{T})+[\mathfrak{X},T[u,v,w]_{T}]_{\g}+[Tu,Tv,T\rho(\mathfrak{X})w]_{\g}\\
&&-[Tu,Tv,[\mathfrak{X},Tw]_{\g}]_{\g}+[T\rho(\mathfrak{X})u,Tv,Tw]_{\g}-[[\mathfrak{X},Tu]_{\g},Tv,Tw]_{\g}\\
&&-T(\rho(T\rho(\mathfrak{X})u,Tv)w)+T(\rho([\mathfrak{X},Tu]_{\g},Tv)w)+[Tu,T\rho(\mathfrak{X})v,Tw]_{\g}\\
&&-[Tu,[\mathfrak{X},Tv]_{\g},Tw]_{\g}-T(\rho(Tu,T\rho(\mathfrak{X})v)w)+T(\rho(Tu,[\mathfrak{X},Tv]_{\g})w)\\
&=&-T\big(\rho(\mathfrak {X})\rho(Tu,Tv)w\big)+T\big(\rho(Tu,Tv)\rho(\mathfrak {X})w\big)+T\big(\rho(T\rho(\mathfrak {X})u,Tv)w\big)\\
&&-T\big(\rho(T\rho(\mathfrak {X})u,Tv)w\big)+T\big(\rho([\mathfrak{X},Tu]_{\g},Tv)w\big)+T\big(\rho(Tu,T\rho(\mathfrak{X})v)w\big)\\
&&-T\big(\rho(Tu,T\rho(\mathfrak{X})v)w\big)+T\big(\rho(Tu,[\mathfrak{X},Tv]_{\g})w\big)\\
&=&0.
\end{eqnarray*}
Thus, we deduce that $\partial_{T}\delta(\mathfrak{X})=0.$
\end{proof}
Now we define the cohomology theory of an embedding tensor on the $3$-Lie algebra $(\g,[\cdot,\cdot,\cdot]_\g)$ with respect to the representation $(V;\rho)$.

Let $T$ be an embedding tensor on the $3$-Lie algebra $(\g,[\cdot,\cdot,\cdot]_\g)$ with respect to the representation $(V;\rho)$. Define the set of $k$-cochains by
\begin{equation*}
\frak C_{T}^{k}(V;\g)=
\begin{cases}
\frak C_{\Le}^{k-1}(V;\g),&k\geq2,\\
\g\wedge \g,& k=1.
\end{cases}
\end{equation*}
Define $\dM^{\mathsf{T}}:\frak C_{T}^{k}(V;\g)\rightarrow \frak C_{T}^{k+1}(V;\g)$ by
\begin{equation*}
\dM^{\mathsf{T}}=
\begin{cases}
\partial_{T},& k\geq2,\\
\delta,& k=1.
\end{cases}
\end{equation*}

\begin{thm}
 $(\mathop{\oplus}\limits_{k=1}^{\infty} \mathfrak C_{T}^{k}(V;\g),\dM^{\mathsf{T}})$ is a cochain complex.
\end{thm}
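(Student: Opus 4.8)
The plan is to verify that $\dM^{\mathsf{T}}\circ\dM^{\mathsf{T}}=0$ by treating the two cases separately according to where the domain lives. For cochains of degree $k\geq 2$, the map $\dM^{\mathsf{T}}$ restricts to $\partial_{T}$, which is the Leibniz coboundary operator of the $3$-Leibniz algebra $(V,[\cdot,\cdot,\cdot]_{T})$ with coefficients in the representation $(\g;l_{T},m_{T},r_{T})$; here the identity $\partial_{T}\circ\partial_{T}=0$ is immediate from the general fact (cited from \cite{Casas,Takhtajan1}) that $\partial\circ\partial=0$ for any $3$-Leibniz algebra with coefficients in a representation, applied to the representation constructed in the lemma above. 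So for $k\geq 2$ there is nothing new to prove.

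The only genuinely new case is $k=1$: one must show $\dM^{\mathsf{T}}\circ\dM^{\mathsf{T}}=\partial_{T}\circ\delta=0$ as a map $\g\wedge\g\to\frak C_{\Le}^{2}(V;\g)$. But this is exactly the content of Proposition \ref{cocycle-1-em}, which asserts that for every $\mathfrak{X}\in\g\wedge\g$ the cochain $\delta(\mathfrak{X})\in\Hom(V,\g)=\frak C_{\Le}^{1}(V;\g)$ is a $1$-cocycle, i.e. $\partial_{T}\delta(\mathfrak{X})=0$. Thus I would simply invoke Proposition \ref{cocycle-1-em} to close the $k=1$ case.

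Combining the two cases, $\dM^{\mathsf{T}}\circ\dM^{\mathsf{T}}=0$ on all of $\mathop{\oplus}_{k=1}^{\infty}\frak C_{T}^{k}(V;\g)$, so $(\mathop{\oplus}_{k=1}^{\infty}\frak C_{T}^{k}(V;\g),\dM^{\mathsf{T}})$ is a cochain complex. There is no real obstacle: all the work has been done in the preceding lemma (that $(\g;l_{T},m_{T},r_{T})$ is a representation) and in Proposition \ref{cocycle-1-em}; the theorem is essentially an assembly of these two ingredients, and the proof is a one-paragraph case split.
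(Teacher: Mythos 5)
Your proposal is correct and follows exactly the paper's own argument: the paper likewise reduces the statement to the general identity $\partial_{T}\circ\partial_{T}=0$ for the $3$-Leibniz coboundary (valid once $(\g;l_{T},m_{T},r_{T})$ is known to be a representation) together with Proposition \ref{cocycle-1-em} handling the $k=1$ case $\partial_{T}\circ\delta=0$. No gaps; your write-up just makes the case split more explicit than the paper's one-line proof.
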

\begin{proof}
It follows from Proposition \ref{cocycle-1-em} and the fact that $\partial_{T}$ is the corresponding coboundary operator of the $3$-Leibniz algebra $(V, [\cdot,\cdot,\cdot]_{T} )$ with coefficients in the representation $(\g;l_{T},m_{T},r_{T})$
directly.
\end{proof}

\begin{defi}
The cohomology of the cochain complex $(\mathop{\oplus}\limits_{k=1}^{\infty} \mathfrak C_{T}^{k}(V;\g),\dM^{\mathsf{T}})$ is taken to be the {\bf cohomology for the embedding tensor $T$}.
Denote the set of $k$-cocycles by $\huaZ^{k}(T)$ and the set of $k$-coboundaries by $\huaB^{k}(T)$. The $k$-th cohomology group of the embedding tensor $T$ is denoted by
\begin{eqnarray*}
\huaH^{k}(T)=\huaZ^{k}(T)/\huaB^{k}(T),\quad k\geq 1.
\end{eqnarray*}
\end{defi}

At the end of this section, we give the relationship between the differential $l_1^{T}$ defined by ~\eqref{eq:=L1} using the Maurer-Cartan element $T$ of the Lie $3$-algebra $(C^*(V,\g),\{\cdot,\cdot,\cdot\})$ and the coboundary operator $\dM^{\mathsf{T}}$ of the embedding tensor $T$.
\begin{thm}\label{thm:=L5}
Let $T$ be an embedding tensor on the $3$-Lie algebra $(\g,[\cdot,\cdot,\cdot]_{\g})$ with respect to the representation $(V;\rho)$. Then we have
\begin{eqnarray}\label{eq:=L5}
\dM^{\mathsf{T}} \theta=(-1)^{n-1}l_{1}^{T}\theta,\quad \forall \theta \in \Hom(\underbrace{(\otimes^{2}V)\otimes\cdots \otimes(\otimes^{2}V}_{n-1})\otimes V,\g),n=1,2,\cdots.
\end{eqnarray}
\end{thm}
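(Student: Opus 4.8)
The plan is to verify the identity \eqref{eq:=L5} by direct comparison of the two differentials on an arbitrary cochain $\theta \in \Hom((\otimes^{2}V)^{\otimes(n-1)}\otimes V,\g)$, distinguishing the case $n=1$ from the case $n\geq 2$. First I would expand the right-hand side using the definition \eqref{eq:=L1}: since $l_1^{T}(\theta)=\frac{1}{2}\{T,T,\theta\}$ and the Lie $3$-bracket is given by \eqref{lie2-bracket} as $\{T,T,\theta\}=[[[\mu\boxplus\rho,T]_{\Le},T]_{\Le},\theta]_{\Le}$, I can feed in the formula from Lemma \ref{lem-equation-1} for $[[\mu\boxplus\rho,T]_{\Le},T]_{\Le}$. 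The graded commutator $[-,\theta]_{\Le}$ with an element of $C^{n-1}(V,\g)$ unpacks, via the circle product of Theorem \ref{thm:graded-3-Lebiniz}, into a sum over the ways of inserting $\theta$ among the arguments, together with the ``outer'' composition term; I expect that evaluating this on $(U_1,\dots,U_{n},w)$ reproduces precisely the four families of terms in the $3$-Leibniz coboundary formula \eqref{eq:=cohoET} with the operators $l_T,m_T,r_T$ replaced by their explicit expressions \eqref{eq:=3LeibRep6}--\eqref{eq:=3LeibRep8}, up to the overall sign $(-1)^{n-1}$ coming from the Koszul sign in the bracket.

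For the base case $n=1$, the cochain is just a linear map $\theta\colon V\to\g$, which sits in $\frak C_T^{2}(V;\g)$, so $\dM^{\mathsf T}\theta=\partial_T\theta$; here $(-1)^{n-1}=1$, and I would check directly that $\partial_T\theta$ evaluated on $(u,v,w)$ equals $\frac{1}{2}[[[\mu\boxplus\rho,T]_{\Le},T]_{\Le},\theta]_{\Le}(u,v,w)$. Using Lemma \ref{lem-equation-1}, the latter works out to $[\theta u,Tv,Tw]_{\g}+[Tu,\theta v,Tw]_{\g}+[Tu,Tv,\theta w]_{\g}-T(\rho(\theta u,Tv)w)-T(\rho(Tu,\theta v)w)-\theta(\rho(Tu,Tv)w)$, which is exactly the expression for $\partial_T\theta$ on $\Hom(V,\g)$ recorded right after \eqref{eq:=cohoET}. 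This pins down the normalization and the sign convention simultaneously. I should also separately note the degenerate overlap at the bottom: the differential $\delta\colon\g\wedge\g\to\Hom(V,\g)$ defined by \eqref{0-cochain} is the $k=1$ piece of $\dM^{\mathsf T}$, and although \eqref{eq:=L5} as stated ranges over $n\geq 1$ (i.e.\ $k\geq 2$ in the cochain indexing), one should remark that $l_1^{T}$ restricted to $\g\wedge\g\subset C^{*}(V,\g)$ also recovers $\delta$, so the whole complex $\dM^{\mathsf T}$ is, up to signs, the twisted differential $l_1^{T}$; this is the content that makes the cohomology of $T$ a genuine $L_\infty$-cohomology.

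The main obstacle I anticipate is purely bookkeeping: matching the signs. The circle-product formula in Theorem \ref{thm:graded-3-Lebiniz} carries Koszul signs $(-1)^{(k-1)q}$, shuffle signs $(-1)^{\sigma}$, and an overall $(-1)^{pq}$ on the outer term, while the $3$-Leibniz coboundary \eqref{eq:=cohoET} carries its own signs $(-1)^j$, $(-1)^{j+1}$, $(-1)^{n+1}$; reconciling these for a cochain of degree $n-1$ (so $p=n-1$, $q=0$ for the inner $\mu\boxplus\rho$-pieces, but the nested brackets shift degrees) is where care is needed, and the factor $(-1)^{n-1}$ in \eqref{eq:=L5} is exactly the residue of this computation. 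A clean way to organize it is to observe that $[[\mu\boxplus\rho,T]_{\Le},T]_{\Le}$ is a degree-one element of $C^{2}(\g\oplus V,\g\oplus V)$ whose only nonzero components, after projecting appropriately, are governed by the data $l_T,m_T,r_T$ (by the very lemma used to define those operators), so that $\frac12[[[\mu\boxplus\rho,T]_{\Le},T]_{\Le},\theta]_{\Le}$ is the Nijenhuis--Richardson-type differential attached to that degree-one element; the standard sign computation for such a differential acting on an $n$-ary cochain then yields the factor $(-1)^{n-1}$, and the identification with $\partial_T$ follows by comparing the defining structure maps. Once the $n=1$ case fixes the conventions, the general case is a uniform induction on $n$ with no new ideas, only careful sign tracking.
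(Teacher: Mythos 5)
Your proposal follows essentially the same route as the paper: expand $l_1^{T}\theta=\tfrac12\{T,T,\theta\}=\tfrac12[[[\mu\boxplus\rho,T]_{\Le},T]_{\Le},\theta]_{\Le}$ using Lemma \ref{lem-equation-1} together with the circle product of Theorem \ref{thm:graded-3-Lebiniz}, and match the resulting terms with the coboundary formula \eqref{eq:=cohoET} for the representation $(l_T,m_T,r_T)$, the factor $(-1)^{n-1}$ emerging from the sign bookkeeping. The only caveat is your tangential remark that $l_1^{T}$ ``restricted to $\g\wedge\g\subset C^{*}(V,\g)$'' recovers $\delta$: the space $\g\wedge\g$ is not a graded piece of $C^{*}(V,\g)$, so that identification is not literally meaningful (the paper treats $\delta$ separately in Proposition \ref{cocycle-1-em}), but this does not affect the proof of the stated theorem.
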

\begin{proof}
For all $U_{i}=u_{i}\otimes v_{i}\in \otimes^{2}V, i=1,2,\cdots,n$ and $w \in V$, by Lemma \ref{lem-equation-1} and \eqref{eq:=L1}, we get
\begin{eqnarray*}
&&2l_{1}^{T}\theta(U_{1},U_{2},\cdots,U_{n},w)\\
&=&\{T,T,\theta\}(U_{1},U_{2},\cdots,U_{n},w)\\
&=&{[[[\mu\boxplus\rho,T]_{\Le},T]_{\Le},\theta]}_{\Le}(U_{1},U_{2},\cdots,U_{n},w)\\
&=&{[[\mu\boxplus\rho,T]_{\Le},T]}_{\Le}\big(\theta(U_{1},\cdots,U_{n-1},u_{n})\otimes v_{n},w\big)\\
&&+{[[\mu\boxplus\rho,T]_{\Le},T]}_{\Le}\big(u_{n}\otimes\theta(U_{1},\cdots,U_{n-1},v_{n}),w\big)\\
&&+\sum_{i=1}^{n}(-1)^{n-1}(-1)^{i-1}{[[\mu\boxplus\rho,T]_{\Le},T]}_{\Le}(U_{i},\theta(U_{1},\cdots,\hat{U_{i}},\cdots,U_{n},w))\\
&&-(-1)^{n-1}\sum_{k=1}^{n-1}\sum_{i=1}^{k}(-1)^{i+1}\theta\big(U_{1},\cdots,\hat{U_{i}},\cdots,U_{k},{[[\mu\boxplus\rho,T]_{\Le},T]}_{\Le}(U_{i},u_{k+1})\otimes v_{k+1},\\
&&U_{k+2},\cdots,U_{n},w\big)\\
&&-(-1)^{n-1}\sum_{k=1}^{n-1}\sum_{i=1}^{k}(-1)^{i+1}\theta\big(U_{1},\cdots,\hat{U_{i}},\cdots,U_{k},u_{k+1}\otimes{[[\mu\boxplus\rho,T]_{\Le},T]}_{\Le}(U_{i},v_{k+1}),\\
&&U_{k+2},\cdots,U_{n},w\big)\\
&&-(-1)^{n-1}\sum_{i=1}^{n}(-1)^{i+1}\theta\big(U_{1},\cdots,\hat{U}_{i},\cdots,U_{n},{[[\mu\boxplus\rho,T]_{\Le},T]}_{\Le}(U_{i},w)\big)\\
&&=2(-1)^{n-1}\dM^{\mathsf{T}} \theta,
\end{eqnarray*}
which implies that $\dM^{\mathsf{T}} \theta=(-1)^{n-1}l_{1}^{T}\theta$. The proof is finished.
\end{proof}

\section{Deformations of embedding tensors}\label{defor}

In this section, we study formal deformations of embedding tensors on the $3$-Lie algebra $(\g,[\cdot,\cdot,\cdot]_{\g})$ with respect to the representation $(V;\rho)$ and show that if two formal deformations of an embedding tensor on a $3$-Lie algebra are equivalent, then
their infinitesimals are in the same cohomology class. Moreover, we also study the extendability of an order $n$ deformation to an order $n+1$ deformation of embedding tensors on $3$-Lie algebras.

\subsection{Formal deformations of embedding tensors on a 3-Lie algebra}
Let $\mathbb{K}[[t]]$ be the ring of power series in one variable $t$. Let $(\g,[\cdot,\cdot,\cdot]_{\g})$ be a $3$-Lie algebra over $\mathbb{K}$ and $\mathbb{V}[[t]]$ denote the vector space of formal power series in $t$ with coefficients in $V$. Then there is a $3$-Lie algebra structure over the ring $\mathbb{K}[[t]]$ on $\g[[t]]$ given by
\begin{eqnarray*}
\bigg[\sum_{i\geq 0}x_{i}t^{i},\sum_{j\geq 0}y_{j}t^{j},\sum_{k\geq 0}z_{k}t^{k}\bigg]:=\sum_{s\geq 0}\sum_{i+j+k=s}[x_{i},y_{j},z_{k}]_{\g}t^{s},\quad \forall x_{i},y_{j},z_{k} \in \g.
\end{eqnarray*}

For any representation $(V;\rho)$ of the $3$-Lie algebra $(\g,[\cdot,\cdot,\cdot]_{\g})$, there is a natural action of $\mathbb{\g}[[t]]$ on the $\mathbb{K}[[t]]$-module $\mathbb{V}[[t]]$, which is given by
\begin{eqnarray*}
\rho(\sum_{i\geq 0}x_{i}t^{i},\sum_{j\geq 0}y_{j}t^{j})(\sum_{k\geq 0}v_{k}t^{k})=\sum_{s\geq 0}\sum_{i+j+k=s}\rho(x_{i},y_{j})v_{k}t^{s}, \quad \forall x_{i},y_{j} \in \g, v_{k} \in V.
\end{eqnarray*}

 Consider a power series
\begin{equation*}
T_{t}=\sum_{i\geq 0}\tau_{i}t^{i},\quad \tau_{i}\in \Hom_{\mathbb{K}}(V,\g),
\end{equation*}
that is, $T_{t}\in \Hom_{\mathbb{K}}(V,\g)[[t]]$. Extended it to be a $\mathbb{K}[[t]]$-module map from $\mathbb{V}[[t]]$ to $\mathbb{\g}[[t]]$ which is still denoted by $T_{t}$.
\begin{defi}
Let $T$ be an embedding tensor on the $3$-Lie algebra $(\g,[\cdot,\cdot,\cdot]_{\g})$ with respect to the representation $(V;\rho)$.
If a power series
\begin{equation*}
T_{t}=\sum_{i\geq 0}\tau_{i}t^{i},\quad \tau_{i}\in \Hom_{\mathbb{K}}(V,\g),
\end{equation*}
where $\tau_{0}=T$ satisfies
\begin{equation}\label{eq:=FD1}
[T_{t}u,T_{t}v,T_{t}w]_{\g}=T_{t}\big(\rho(T_{t}u,T_{t}v)w\big),\quad \forall u,v,w\in V,
\end{equation}
then $T_{t}$ is called a {\bf formal deformation} of an embedding tensor $T$.
\end{defi}

Let $T$ be an embedding tensor on the $3$-Lie algebra $(\g,[\cdot,\cdot,\cdot]_{\g})$ with respect to the representation $(V;\rho)$ and $T_{t}$ be a formal deformation. For any $u,v,w\in V$, we have the following equation:
\begin{equation}\label{FD2}
\sum_{s\geq 0}\sum_{i+j+k=s}[\tau_{i}u,\tau_{j}v,\tau_{k}w]_{\g}t^{s}=\sum_{s\geq 0}\sum_{i+j+k=s}\tau_{k}(\rho(\tau_{i}u,\tau_{j}v)w)t^{s}.
\end{equation}

For $s=1$, \eqref{FD2} is equivalent to
\begin{eqnarray*}
&&[Tu,Tv,\tau_{1}w]_{\g}+[\tau_{1} u,Tv,Tw]_{\g}+[Tu,\tau_{1}v,Tw]_{\g}\\
&=&\tau_{1}(\rho(Tu,Tv)w)+T(\rho(\tau_{1} u,Tv)w)+T(\rho(Tu,\tau_{1} v)w).
\end{eqnarray*}
Then we have the following results.
\begin{pro}\label{pro:=FD3} Let $T_{t}=\sum_{i\geq 0}\tau_{i}t^{i}$ be a formal deformation of an embedding tensor on the $3$-Lie algebra $(\g,[\cdot,\cdot,\cdot]_{\g})$ with respect to the representation $(V;\rho)$. Then $\tau_{1}$ is a $2$-cocycle for the embedding tensor $T$, that is $\dM^{\mathsf{T}}\tau_{1}=0$.
\end{pro}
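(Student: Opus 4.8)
The plan is to extract the coefficient of $t^1$ from the deformation equation \eqref{FD2} and recognize it as exactly the $2$-cocycle condition for $\dM^{\mathsf{T}} = \partial_T$ on $\frak C_T^2(V;\g) = \frak C^1_{\Le}(V;\g) = \Hom(V,\g)$. The excerpt has already done the first half of this: setting $s=1$ in \eqref{FD2} yields
\[
[Tu,Tv,\tau_{1}w]_{\g}+[\tau_{1} u,Tv,Tw]_{\g}+[Tu,\tau_{1}v,Tw]_{\g}=\tau_{1}(\rho(Tu,Tv)w)+T(\rho(\tau_{1} u,Tv)w)+T(\rho(Tu,\tau_{1} v)w),
\]
for all $u,v,w\in V$. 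So the one thing left to do is to match this identity against the formula for $\partial_T\theta$ on a $1$-cochain.

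The key step is therefore to compute $\partial_T \tau_1 (u,v,w)$ explicitly from \eqref{eq:=cohoET} with $n=1$. With $n=1$ there are no pairs $j<k$, the single index $j=1$ contributes the terms $-\tau_1([U_1,w]_T) + l_T(U_1)\tau_1(w)$, and the final line contributes $(-1)^2(m_T(u_1,w)\tau_1(v_1) + r_T(v_1,w)\tau_1(u_1))$. Writing $U_1 = u\otimes v$ and the third slot as $w$, and using $[u,v,w]_T = \rho(Tu,Tv)w$ together with the definitions \eqref{eq:=3LeibRep6}--\eqref{eq:=3LeibRep8}:
\[
l_T(u,v)(\tau_1 w) = [Tu,Tv,\tau_1 w]_\g, \quad m_T(u,w)(\tau_1 v) = [Tu,\tau_1 v, Tw]_\g - T(\rho(Tu,\tau_1 v)w),
\]
and $r_T = -m_T$ so that $r_T(v,w)(\tau_1 u) = -m_T(v,w)(\tau_1 u) = -\big([Tv,\tau_1 u, Tw]_\g - T(\rho(Tv,\tau_1 u)w)\big) = [\tau_1 u, Tv, Tw]_\g + T(\rho(Tv, \tau_1 u)w)$ after using skew-symmetry of $[\cdot,\cdot,\cdot]_\g$ and of $\rho$ in its two arguments (note $\rho(Tv,\tau_1 u) = -\rho(\tau_1 u, Tv)$, which flips a sign, and one checks the signs line up). Assembling these, $\partial_T\tau_1(u,v,w)$ equals
\[
[\tau_1 u, Tv, Tw]_\g + [Tu,\tau_1 v, Tw]_\g + [Tu,Tv,\tau_1 w]_\g - \tau_1(\rho(Tu,Tv)w) - T(\rho(\tau_1 u, Tv)w) - T(\rho(Tu,\tau_1 v)w),
\]
which vanishes precisely by the $s=1$ equation above. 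Hence $\dM^{\mathsf{T}}\tau_1 = \partial_T \tau_1 = 0$, so $\tau_1 \in \huaZ^2(T)$ is a $2$-cocycle.

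The main (and really only) obstacle is bookkeeping of signs: one must be careful that the general formula \eqref{eq:=cohoET} was written for arbitrary $n$, and specializing to $n=1$ requires correctly dropping the double sums, tracking the $(-1)^{j+1}$ and $(-1)^{n+1}$ factors, and using the skew-symmetry of the $3$-Lie bracket and the $2$-form antisymmetry of $\rho$ to bring the $r_T$-term into the shape appearing in the deformation equation. Everything else is a direct comparison. Alternatively, one could avoid the component computation entirely by invoking Theorem~\ref{thm:=L5}, which gives $\dM^{\mathsf{T}}\tau_1 = l_1^T\tau_1 = \tfrac12\{T,T,\tau_1\}$: expanding the Maurer–Cartan equation \eqref{eq:=FD1} in powers of $t$ via Theorem~\ref{thm:deformation} shows the order-$t$ term is exactly $l_1^T(\tau_1) = 0$, giving the result immediately. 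I would likely present the short route via Theorem~\ref{thm:=L5} and Theorem~\ref{thm:deformation}, remarking that it also follows from the direct computation with \eqref{eq:=cohoET}.
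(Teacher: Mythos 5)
Your proposal is correct and follows essentially the same route as the paper: extract the coefficient of $t$ in \eqref{FD2} and identify the resulting identity with the explicit closedness condition for a $1$-cochain $\theta\in\Hom(V,\g)$ already recorded after \eqref{eq:=cohoET} (the paper leaves this identification implicit, while you verify it by specializing $\partial_T$ to $n=1$, with the signs handled correctly). The alternative you sketch via Theorem \ref{thm:=L5} is also valid but is not the argument the paper uses here.
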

\begin{defi}
Let $T$ be an embedding tensor on the $3$-Lie algebra $(\g,[\cdot,\cdot,\cdot]_{\g})$ with respect to the representation $(V;\rho)$. Then $\tau_{1}$ given by Proposition $\ref{pro:=FD3}$ is called the {\bf infinitesimal} of the formal deformation $T_{t}=\sum_{i\geq 0}\tau_{i}t^{i}$ of $T$.
\end{defi}
In the sequel, we discuss equivalent formal deformations.
\begin{defi}
Two formal deformations of an embedding tensor $T=\tau_{0}=\tilde{\tau}_{0}$ on a $3$-Lie algebra $T_{t}=\sum_{i\geq 0}\tau_{i}t^{i}$ and $\tilde{T}_{t}=\sum_{i\geq 0}\tilde{\tau}_{i}t^{i}$ are {\bf equivalent} if there exist $\mathfrak{X} \in \wedge^{2}\g$, $\phi_{i}\in \gl(\g),\psi_{i}\in \gl(V), i\geq 2$, such that for
\begin{equation}\label{eq:=FD4-0}
\phi_{t}=\Id_{\g}+t\ad_{\mathfrak{X}}+\sum_{i\geq 2}t^{i}\phi_{i},\quad \psi_{t}=\Id_{V}+t\rho(\mathfrak{X})+\sum_{i\geq 2}t^{i}\psi_{i},
\end{equation}
the following conditions hold:
\begin{eqnarray}
\label{eq:=FD4-1}\phi_{t}([x,y,z]_{\g})&=&[\phi_{t}(x),\phi_{t}(y),\phi_{t}(z)]_{\g}, \quad \forall x,y,z \in \g,\\
\label{eq:=FD4-2}\rho(\phi_{t}(x),\phi_{t}(y))\psi_{t}(u)&=&\psi_{t}(\rho(x,y)u), \quad \forall u\in V,\\
\label{eq:=FD4-3}T_{t}\circ\psi_{t}&=&\phi_{t}\circ\tilde{T}_{t}.
\end{eqnarray}
\end{defi}
\begin{thm}
Let $T_{t}$ and $\tilde{T}_{t}$ be two equivalent formal deformations on the $3$-Lie algebra $(\g,[\cdot,\cdot,\cdot]_{\g})$ with respect to the representation $(V;\rho)$. Then their infinitesimals are in the same cohomology class in $\huaH^{2}(T)$.
\end{thm}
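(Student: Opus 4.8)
The plan is to extract the degree-one terms from the equivalence condition \eqref{eq:=FD4-3} and show that the difference $\tilde{\tau}_1 - \tau_1$ equals $\dM^{\mathsf{T}}$ applied to the $1$-cochain $\mathfrak{X} \in \g \wedge \g = \frak C_T^1(V;\g)$, so that $[\tilde{\tau}_1] = [\tau_1]$ in $\huaH^2(T)$. First I would expand $T_t \circ \psi_t = \phi_t \circ \tilde{T}_t$ using \eqref{eq:=FD4-0}, collecting the coefficient of $t^1$: on the left one gets $\tau_1 + T \circ \rho(\mathfrak{X})$ (here $\tau_0 = T$ and the $t^1$ part of $\psi_t$ is $\rho(\mathfrak{X})$), and on the right one gets $\tilde{\tau}_1 + \ad_{\mathfrak{X}} \circ T$, where $\ad_{\mathfrak{X}}(Tv) = [\mathfrak{X}, Tv]_{\g}$. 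Hence, applied to any $v \in V$,
$$
\tau_1(v) + T\rho(\mathfrak{X})v = \tilde{\tau}_1(v) + [\mathfrak{X}, Tv]_{\g},
$$
which rearranges to
$$
\tilde{\tau}_1(v) - \tau_1(v) = T\rho(\mathfrak{X})v - [\mathfrak{X}, Tv]_{\g} = \delta(\mathfrak{X})v,
$$
by the definition \eqref{0-cochain} of $\delta$.

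Next I would recall that $\delta = \dM^{\mathsf{T}}$ on $\frak C_T^1(V;\g) = \g \wedge \g$, so the displayed identity says precisely $\tilde{\tau}_1 - \tau_1 = \dM^{\mathsf{T}}(\mathfrak{X}) \in \huaB^2(T)$. Since both $\tau_1$ and $\tilde{\tau}_1$ are $2$-cocycles for $T$ by Proposition \ref{pro:=FD3}, they represent classes in $\huaH^2(T)$, and the computation above shows these classes coincide. That completes the argument.

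The only genuinely delicate point is bookkeeping in the first step: one must be careful that the degree-zero pieces $T \circ \Id_V = \Id_{\g} \circ T$ cancel identically (using $\tau_0 = \tilde{\tau}_0 = T$), so that the $t^1$-coefficient equation is exactly the one written above, with no stray contributions from $\phi_i, \psi_i$ for $i \geq 2$ (these only affect coefficients of $t^i$ with $i \geq 2$). I do not expect to need conditions \eqref{eq:=FD4-1} or \eqref{eq:=FD4-2} at all for the infinitesimal statement — those govern higher-order compatibility — so the proof is essentially a one-line extraction once the notation is unwound. I would present it as: expand \eqref{eq:=FD4-3} modulo $t^2$, read off the coefficient of $t$, rearrange, and identify the right-hand side with $\delta(\mathfrak{X}) = \dM^{\mathsf{T}}\mathfrak{X}$; then invoke Proposition \ref{pro:=FD3} to conclude both infinitesimals are cocycles differing by a coboundary.
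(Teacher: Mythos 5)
Your proposal is correct and follows exactly the paper's own (very short) argument: compare the coefficient of $t$ in $T_t\circ\psi_t=\phi_t\circ\tilde T_t$ to get $\tilde\tau_1 u=\tau_1 u+T\rho(\mathfrak{X})u-[\mathfrak{X},Tu]_{\g}=\tau_1 u+(\dM^{\mathsf{T}}\mathfrak{X})u$, so the two infinitesimals differ by the coboundary $\delta(\mathfrak{X})$. Your bookkeeping of the degree-zero cancellation and the irrelevance of \eqref{eq:=FD4-1}--\eqref{eq:=FD4-2} at this order is accurate.
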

\begin{proof}
By $T_{t}\circ\psi_{t}=\phi_{t}\circ\tilde{T}_{t}$, we deduce that
\begin{equation*}
\tilde{\tau}_{1}u=\tau_{1}u+T\rho(\mathfrak{X})u-[\mathfrak{X},Tu]_{\g}=\tau_{1}u+(\dM^{\mathsf{T}}\mathfrak{X})u,
\end{equation*}
which means that $\tilde{\tau}_{1}$ and $\tau_{1}$ are in the same cohomology class.
\end{proof}

\subsection{Order $n$ deformations of embedding tensors on a 3-Lie algebra}
\begin{defi}
Let $T:V\rightarrow \g$ be an embedding tensor on the $3$-Lie algebra $(\g,[\cdot,\cdot,\cdot]_{\g})$ with respect to a representation $(V;\rho)$. If $T_{t}=\sum_{i=0}^{n}\tau_{i}t^{i}$ with $\tau_{0}=T, \tau_{i}\in \Hom_{\mathbb{K}} (V,\g), i=1,\cdots,n,$ defines a $\mathbb{K}[[t]]/(t^{n+1})$-module map from $\V[[t]]/(t^{n+1})$ to the $3$-Lie algebra $\g[[t]]/(t^{n+1})$ and satisfies
\begin{equation}\label{eq:=order n1}
[T_{t}u,T_{t}v,T_{t}w]_{\g}=T_{t}\Big(\rho(T_{t}u,T_{t}v)w\Big),\quad \forall u,v,w\in V.
\end{equation}
Then $T_{t}$ is called an {\bf order $n$ deformation} of the embedding tensor $T.$

\end{defi}
For all $0\leq s\leq n,~u,v,w\in V$, we have
\begin{eqnarray}\label{eq:=order n2}
\sum_{i+j+k=s\atop
 0\leq i,j,k\leq s}\Big([\tau_{i}u,\tau_{j}v,\tau_{k}w]_{\g}-\tau_{k}(\rho(\tau_{i}u,\tau_{j}v)w)\Big)=0.
\end{eqnarray}

\begin{defi}
Let $T_{t}=\sum_{i=0}^{n}\tau_{i}t^{i}$ be an order $n$ deformation of an embedding tensor on a $3$-Lie algebra $(\g,[\cdot,\cdot,\cdot]_{\g})$ with respect to a representation $(V;\rho)$. If there exists a $1$-cochain $\tau_{n+1}\in \Hom_{\mathbb{K}}(V,\g)$ such that $\hat{T}_{t}=T_{t}+\tau_{n+1}t^{n+1}$ is an order $n+1$ deformation of the embedding tensor $T$, then $T_{t}$ is called {\bf extendable}.
\end{defi}
Let $T_{t}=\sum_{i=0}^{n}\tau_{i}t^{i}$ be an order $n$ deformation of an embedding tensor $T$ on the $3$-Lie algebra $(\g,[\cdot,\cdot,\cdot]_{\g})$ with respect to the representation $(V;\rho)$. Define $\Ob \in \frak C_{T}^{3}(V;\g)$ by
\begin{equation}\label{eq:=Obt1}
\Ob(u,v,w)=\sum_{i+j+k=n+1\atop
0\leq i,j,k\leq n}\Big([\tau_{i}u,\tau_{j}v,\tau_{k}w]_{\g}-\tau_{k}(\rho(\tau_{i}u,\tau_{j}v)w)\Big).
\end{equation}
Then we deduce the proposition as follows.
\begin{pro}
With the above notations, $\Ob$ is a $3$-cocycle, that is $\dM^{\mathsf{T}}\Ob=0$.
\end{pro}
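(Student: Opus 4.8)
The plan is to exploit the $L_\infty$-theoretic machinery built up in Sections~\ref{sec:ET} and \ref{cohomology} rather than expanding \eqref{eq:=Obt1} by brute force. The key observation is that an order $n$ deformation $T_t=\sum_{i=0}^n\tau_it^i$ is precisely a Maurer--Cartan element of the Lie $3$-algebra $(C^*(V,\g),\{\cdot,\cdot,\cdot\})$ modulo $t^{n+1}$, and the obstruction $\Ob$ should be identified, up to a universal nonzero scalar, with the degree-$t^{n+1}$ component of $\frac{1}{3!}\{T_t,T_t,T_t\}$. Concretely, by the computation in the proof of Theorem~\ref{thm:=ET1} one has $\frac{1}{3!}\{T_t,T_t,T_t\}(u,v,w)=[T_tu,T_tv,T_tw]_\g-T_t(\rho(T_tu,T_tv)w)$, whose coefficient of $t^{s}$ is exactly $\sum_{i+j+k=s,\,0\le i,j,k\le n}\big([\tau_iu,\tau_jv,\tau_kw]_\g-\tau_k(\rho(\tau_iu,\tau_jv)w)\big)$; so for $s\le n$ this vanishes by \eqref{eq:=order n2}, and for $s=n+1$ it is $\Ob$.

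First I would write $T_t=T+\bar T_t$ with $\bar T_t=\sum_{i=1}^n\tau_it^i$ and apply Lemma~\ref{Getzler-th}: since $T$ is a Maurer--Cartan element, $\bar T_t$ being Maurer--Cartan modulo $t^{n+1}$ for $\{\cdot,\cdot,\cdot\}$ is equivalent to $\bar T_t$ satisfying the Maurer--Cartan equation of the twisted $L_\infty$-algebra $(C^*(V,\g),l_1^T,l_2^T,l_3^T)$ modulo $t^{n+1}$, i.e.
$$
l_1^T(\bar T_t)+\tfrac12 l_2^T(\bar T_t,\bar T_t)+\tfrac1{3!}l_3^T(\bar T_t,\bar T_t,\bar T_t)\equiv 0 \pmod{t^{n+1}}.
$$
Collecting the coefficient of $t^{n+1}$ in the left-hand side and using that the order-$n$ deformation equations kill all lower-order terms, one gets that this $t^{n+1}$-coefficient equals $l_1^T(\tau_{n+1}')+(\text{terms in }\tau_1,\dots,\tau_n)$ for the would-be extension, and the pure obstruction term — the part independent of $\tau_{n+1}$ — is exactly $\pm\Ob$ (up to the scalar $\tfrac1{3!}$, matching the factor $6$ appearing in Theorem~\ref{thm:=ET1}). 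Then I would apply $l_1^T$ to the full MC expression: since $(l_1^T)^2=0$ in any $L_\infty$-algebra and $l_1^T$ is a derivation of the brackets up to sign, applying $l_1^T$ to $l_1^T(\bar T_t)+\tfrac12 l_2^T(\bar T_t,\bar T_t)+\tfrac1{3!}l_3^T(\bar T_t,\bar T_t,\bar T_t)$ and using that all terms of order $\le n$ vanish forces $l_1^T(\Ob)=0$. Finally, by Theorem~\ref{thm:=L5}, $l_1^T$ agrees with $\dM^{\mathsf T}$ up to a sign on the relevant cochain space, so $\dM^{\mathsf T}\Ob=0$, as claimed.

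The main obstacle is the bookkeeping in the middle step: verifying that the $t^{n+1}$-coefficient of the twisted Maurer--Cartan expression genuinely separates as $(\dM^{\mathsf T}\text{-exact in }\tau_{n+1})+\Ob$ with all cross terms among $\tau_1,\dots,\tau_n$ contributing only to $\Ob$, and tracking the signs and combinatorial multiplicities through the identifications of Theorems~\ref{thm:=ET1} and \ref{thm:=L5}. An alternative, more self-contained route — which I would mention as a fallback — is to avoid the $L_\infty$-formalism and instead differentiate \eqref{eq:=order n2} directly: plug the definition \eqref{eq:=Obt1} of $\Ob$ into the formula \eqref{eq:=cohoET} for $\partial_T=\dM^{\mathsf T}$ on a $2$-cochain, expand $[Tu,Tv,\Ob(\cdot)]$-type terms using the Fundamental Identity \eqref{eq:=3-lie} and the representation axioms \eqref{rep1:=3-lie}--\eqref{rep2:=3-lie}, and check that everything cancels against the sum over the order-$n$ deformation relations \eqref{eq:=order n2} for $s\le n$. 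This is the same calculation that underlies Proposition~\ref{cocycle-1-em}, just one homological degree higher, and while lengthy it is purely mechanical.
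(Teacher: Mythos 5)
Your proposal is correct and follows essentially the same route as the paper: the authors likewise identify $\Ob=\frac{1}{6}\sum_{i+j+k=n+1}\{\tau_i,\tau_j,\tau_k\}$, rewrite the order-$n$ equations as $-\frac12\{T,T,\tau_s\}=\frac16\sum_{i+j+k=s}\{\tau_i,\tau_j,\tau_k\}$, apply $\dM^{\mathsf T}=\pm l_1^T=\mp\frac12\{T,T,\cdot\}$ via Theorem \ref{thm:=L5}, and cancel everything using the graded Jacobi identity \eqref{Lie-3-alg-2} — which is exactly the ``standard $L_\infty$ obstruction argument'' you describe, carried out explicitly. The only imprecision is your claim that $l_1^T$ is a derivation of the brackets up to sign: for $l_3^T$ this holds only modulo $l_2^T\circ l_2^T$ terms, and it is precisely these cross terms whose cancellation (against the lower-order deformation equations) constitutes the bookkeeping the paper works through.
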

\begin{proof}
According to the bracket given by \eqref{lie2-bracket}, we have
\begin{eqnarray*}
&&\{\tau_i,\tau_j,\tau_k\}(u,v,w)\\
&=&[[[\mu\boxplus\rho,\tau_i]_{\Le},\tau_j]_{\Le},\tau_k]_{\Le}(u,v,w)\\
&=&[[\mu\boxplus\rho,\tau_i]_{\Le},\tau_j]_{\Le}(\tau_{k}u,v,w)+[[\mu\boxplus\rho,\tau_i]_{\Le},\tau_j]_{\Le}(u,\tau_{k}v,w)\\
&&+[[\mu\boxplus\rho,\tau_i]_{\Le},\tau_j]_{\Le}(u,v,\tau_{k}w)-\tau_{k}[[\mu\boxplus\rho,\tau_i]_{\Le},\tau_j]_{\Le}(u,v,w)\\
&=&[\mu\boxplus\rho,\tau_i]_{\Le}(\tau_{k}u,\tau_{j}v,w)+[\mu\boxplus\rho,\tau_i]_{\Le}(\tau_{k}u,v,\tau_{j}w)-\tau_{j}[\mu\boxplus\rho,\tau_i]_{\Le}(\tau_{k}u,v,w)\\
&&+[\mu\boxplus\rho,\tau_i]_{\Le}(\tau_{j}u,\tau_{k}v,w)+[\mu\boxplus\rho,\tau_i]_{\Le}(u,\tau_{k}v,\tau_{j}w)-\tau_{j}[\mu\boxplus\rho,\tau_i]_{\Le}(u,\tau_{k}v,w)\\
&&+[\mu\boxplus\rho,\tau_i]_{\Le}(\tau_{j}u,v,\tau_{k}w)+[\mu\boxplus\rho,\tau_i]_{\Le}(u,\tau_{j}v,\tau_{k}w)-\tau_{j}[\mu\boxplus\rho,\tau_i]_{\Le}(u,v,\tau_{k}w)\\
&&-\tau_{k}[\mu\boxplus\rho,\tau_i]_{\Le}(\tau_{j}u,v,w)-\tau_{k}[\mu\boxplus\rho,\tau_i]_{\Le}(u,\tau_{j}v,w)-\tau_{k}[\mu\boxplus\rho,\tau_i]_{\Le}(u,v,\tau_{j}w)\\
&&+\tau_{k}\tau_{j}[\mu\boxplus\rho,\tau_i]_{\Le}(u,v,w)\\
&=&[\tau_{k}u,\tau_{j}v,\tau_{i}w]_{\g}-\tau_{i}\rho(\tau_{k}u,\tau_{j}v)w+[\tau_{k}u,\tau_{i}v,\tau_{j}w]_{\g}-\tau_{j}\rho(\tau_{k}u,\tau_{i}v)w\\
&&+[\tau_{j}u,\tau_{k}v,\tau_{i}w]_{\g}-\tau_{i}\rho(\tau_{j}u,\tau_{k}v)w-\tau_{j}\rho(\tau_{i}u,\tau_{k}v)w+[\tau_{j}u,\tau_{i}v,\tau_{k}w]_{\g}\\
&&+[\tau_{i}u,\tau_{j}v,\tau_{k}w]_{\g}-\tau_{k}\rho(\tau_{j}u,\tau_{i}v)w-\tau_{k}\rho(\tau_{i}u,\tau_{j}v)w+[\tau_{i}u,\tau_{k}v,\tau_{j}w]_{\g}.
\end{eqnarray*}
This gives
\begin{eqnarray}
\Ob=\frac{1}{6}\sum_{i+j+k=n+1\atop 0\leq i,j,k \leq n}\{\tau_{i},\tau_{j},\tau_{k}\}=\frac{1}{6}\sum_{i+j+k=n+1\atop 1\leq i,j,k \leq n}\{\tau_{i},\tau_{j},\tau_{k}\}+\frac{1}{2}\sum_{i+j=n+1\atop 1\leq i,j \leq n}\{T,\tau_{i},\tau_{j}\}.
\end{eqnarray}
Since $T_t$ is an order
$n$ deformation  of the  embedding tensor $T$, \eqref{eq:=order n2} is equivalent to
\begin{eqnarray}\label{eq:=order n3}
\label{deformation-2}-\frac{1}{2}\{T,T,\tau_{s}\}&=&\frac{1}{6}\sum\limits_{i+j+k=s\atop 0\leq i,j,k\leq s-1}\{\tau_i,\tau_j,\tau_k\}\\
\nonumber&=&\frac{1}{6}\sum\limits_{i+j+k=s\atop 1\leq i,j,k\leq s-1}\{\tau_i,\tau_j,\tau_k\}+\frac{1}{2}\sum\limits_{i+j=s\atop 1\leq i,j\leq s-1}\{T,\tau_i,\tau_j\},\quad 0\leq s\leq n.
\end{eqnarray}
By Theorem \ref{thm:=L5} and \eqref{eq:=L1}, we have
\begin{eqnarray*}
\dM^{\mathsf{T}}\Ob&=&-\frac{1}{2}\{T,T,\Ob\}\\
             &=&-\frac{1}{12}\sum\limits_{i+j+k=n+1\atop 1\leq i,j,k\leq n}\{T,T,\{\tau_i,\tau_j,\tau_k\}\}-\frac{1}{4}\sum\limits_{i+j=n+1\atop 1\leq i,j\leq n}\{T,T,\{T,\tau_i,\tau_j\}\}\\
             &=&-\frac{1}{12}\sum\limits_{i+j+k=n+1\atop 1\leq i,j,k\leq n}\{\{\tau_i,\tau_j,\tau_k\},T,T\}-\frac{1}{4}\sum\limits_{i+j=n+1\atop 1\leq i,j\leq n}\{\{T,\tau_i,\tau_j\},T,T\}\\
               &=&\frac{1}{6}\sum\limits_{i+j+k=n+1\atop 1\leq i,j,k\leq n}\Big(\{\{\tau_i,\tau_j,T\},\tau_k,T\}+\{\{\tau_i,\tau_k,T\},\tau_j,T\}+\{\{\tau_j,\tau_k,T\},\tau_i,T\}\Big)\\
               &&+\frac{1}{12}\sum\limits_{i+j+k=n+1\atop 1\leq i,j,k\leq n}\Big(\{\{\tau_i,T,T\},\tau_j,\tau_k\}+\{\{\tau_j,T,T\},\tau_i,\tau_k\}+\{\{\tau_k,T,T\},\tau_i,\tau_j\}\Big)\\
               &&+\frac{1}{4}\sum\limits_{i+j=n+1\atop 1\leq i,j\leq n}\Big(\{\{T,T,\tau_i\},\tau_j,T\}+\{\{T,T,\tau_j\},\tau_i,T\}\Big)\\
               &=&\frac{1}{2}\sum\limits_{i+j+k=n+1\atop 1\leq i,j,k\leq n}\{\{\tau_i,\tau_j,T\},\tau_k,T\}+\frac{1}{4}\sum\limits_{i+j+k=n+1\atop 1\leq i,j,k\leq n}\{\{\tau_i,T,T\},\tau_j,\tau_k\}+\frac{1}{2}\sum\limits_{i+j=n+1\atop 1\leq i,j\leq n}\{\{T,T,\tau_i\},\tau_j,T\}\\
               &=&\frac{1}{2}\sum\limits_{i+j+k=n+1\atop 1\leq i,j,k\leq n}\{\{\tau_i,\tau_j,T\},\tau_k,T\}-\frac{1}{12}\sum\limits_{i'+i''+i'''+j+k=n+1\atop 1\leq i',i'',i''',j,k\leq n}\{\{\tau_{i'},\tau_{i''},\tau_{i'''}\},\tau_j,\tau_k\}\\
               &&-\frac{1}{4}\sum\limits_{i'+i''+j+k=n+1\atop 1\leq i',i'',j,k\leq n}\{\{T,\tau_{i'},\tau_{i''}\},\tau_j,\tau_k\}-\frac{1}{6}\sum\limits_{i'+i''+i'''+j=n+1\atop 1\leq i',i'',i''',j\leq n}\{\{\tau_{i'},\tau_{i''},\tau_{i'''}\},\tau_j,T\}\\
               &&-\frac{1}{2}\sum\limits_{i'+i''+j=n+1\atop 1\leq i',i'',j\leq n}\{\{T,\tau_{i'},\tau_{i''}\},\tau_j,T\}\\
               &=&-\frac{1}{12}\sum\limits_{i'+i''+i'''+j+k=n+1\atop 1\leq i',i'',i''',j,k\leq n}\{\{\tau_{i'},\tau_{i''},\tau_{i'''}\},\tau_j,\tau_k\}-\frac{1}{4}\sum\limits_{i'+i''+j+k=n+1\atop 1\leq i',i'',j,k\leq n}\{\{T,\tau_{i'},\tau_{i''}\},\tau_j,\tau_k\}\\
               &&-\frac{1}{6}\sum\limits_{i'+i''+i'''+j=n+1\atop 1\leq i',i'',i''',j\leq n}\{\{\tau_{i'},\tau_{i''},\tau_{i'''}\},\tau_j,T\}\\
               &=&-\frac{1}{4}\sum\limits_{i'+i''+j+k=n+1\atop 1\leq i',i'',j,k\leq n}\{\{T,\tau_{i'},\tau_{i''}\},\tau_j,\tau_k\}-\frac{1}{6}\sum\limits_{i'+i''+i'''+j=n+1\atop 1\leq i',i'',i''',j\leq n}\{\{\tau_{i'},\tau_{i''},\tau_{i'''}\},\tau_j,T\}\\
              &=&-\frac{1}{12}\sum\limits_{i'+i''+j+k=n+1\atop 1\leq i',i'',j,k\leq n}\Big(\{\{T,\tau_{i'},\tau_{i''}\},\tau_j,\tau_k\}+\{\{\tau_{i'},T,\tau_{i''}\},\tau_j,\tau_k\}+\{\{\tau_{i'},\tau_{i''},T\},\tau_j,\tau_k\}\Big)\\
              &&-\frac{1}{12}\sum\limits_{i'+i''+i'''+j=n+1\atop 1\leq i',i'',i''',j\leq n}\Big(\{\{\tau_{i'},\tau_{i''},\tau_{i'''}\},\tau_j,T\}+\{\{\tau_{i'},\tau_{i''},\tau_{i'''}\},T,\tau_j\}\Big)\\
               &=&0.
\end{eqnarray*}
In this way, we obtain that the $3$-cochain $\Ob$ is a $3$-cocycle.
\end{proof}
\begin{defi}
Let $T_{t}=\sum_{i=0}^{n}\tau_{i}t^{i}$ be an order $n$ deformation of an embedding tensor on the $3$-Lie algebra $(\g,[\cdot,\cdot,\cdot]_{\g})$ with respect to the representation $(V;\rho)$. The cohomology class $[\Ob] \in \huaH^{3}(T)$ is called the {\bf obstruction class} of $T_{t}$ being extendable.
\end{defi}
\begin{thm}
Let $T_{t}=\sum_{i=0}^{n}\tau_{i}t^{i}$ be an order $n$ deformation of an embedding tensor on the $3$-Lie algebra $(\g, [\cdot,\cdot,\cdot]_{\g})$ with respect to the representation $(V ; \rho)$. Then $T_{t}$ is extendable if and only if the obstruction $[\Ob]$ is trivial.
\end{thm}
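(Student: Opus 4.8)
The plan is to reduce the extendability of $T_t$ to the solvability of a single linear equation whose inhomogeneous term is $\Ob$, and then to observe that this equation has a solution exactly when $\Ob$ is a coboundary, i.e. when $[\Ob]=0$.

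First I would pick an arbitrary $1$-cochain $\tau_{n+1}\in\Hom_{\mathbb K}(V,\g)$, set $\hat T_t=T_t+\tau_{n+1}t^{n+1}$, and note that for every $0\leq s\leq n$ the coefficient of $t^s$ in $[\hat T_tu,\hat T_tv,\hat T_tw]_\g-\hat T_t\big(\rho(\hat T_tu,\hat T_tv)w\big)$ coincides with the corresponding coefficient for $T_t$, since $\tau_{n+1}$ enters only in degrees $\geq n+1$. Hence, in view of \eqref{eq:=order n2}, $\hat T_t$ is an order $n+1$ deformation if and only if the single identity at $s=n+1$ holds:
\[
\sum_{\substack{i+j+k=n+1\\ 0\leq i,j,k\leq n+1}}\Big([\tau_iu,\tau_jv,\tau_kw]_\g-\tau_k(\rho(\tau_iu,\tau_jv)w)\Big)=0,\qquad\forall\,u,v,w\in V.
\]

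Next I would split this sum into the terms with all indices $\leq n$, which by \eqref{eq:=Obt1} add up to $\Ob(u,v,w)$, and the terms in which one index equals $n+1$, which forces the other two to be $0$ and hence equal to $T=\tau_0$; the latter terms sum to
\[
[\tau_{n+1}u,Tv,Tw]_\g+[Tu,\tau_{n+1}v,Tw]_\g+[Tu,Tv,\tau_{n+1}w]_\g-T(\rho(\tau_{n+1}u,Tv)w)-T(\rho(Tu,\tau_{n+1}v)w)-\tau_{n+1}(\rho(Tu,Tv)w),
\]
which, by the explicit description of $\dM^{\mathsf T}$ on $\Hom_{\mathbb K}(V,\g)$ recorded just after \eqref{eq:=cohoET} (equivalently, by $\dM^{\mathsf T}=l_1^{T}$ there, via Theorem \ref{thm:=L5}, \eqref{eq:=L1} and Lemma \ref{lem-equation-1}), equals $(\dM^{\mathsf T}\tau_{n+1})(u,v,w)$; this is exactly the computation already carried out for $\tau_1$ in Proposition \ref{pro:=FD3}. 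Therefore the order $n+1$ equation reads $\Ob+\dM^{\mathsf T}\tau_{n+1}=0$, so $T_t$ is extendable if and only if there exists $\tau_{n+1}\in\Hom_{\mathbb K}(V,\g)$ with $\dM^{\mathsf T}\tau_{n+1}=-\Ob$, that is, with $\Ob\in\huaB^3(T)$.

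Finally, since the preceding proposition shows $\dM^{\mathsf T}\Ob=0$, the class $[\Ob]\in\huaH^3(T)$ is well defined, and $\Ob\in\huaB^3(T)$ is equivalent to $[\Ob]=0$; this proves both implications simultaneously. The step that needs the most care is the index-and-sign bookkeeping that identifies the $\tau_{n+1}$-linear part of the order $n+1$ equation with $\dM^{\mathsf T}\tau_{n+1}$, and the $s=1$ case of Proposition \ref{pro:=FD3} serves as a check. An essentially equivalent alternative is to argue inside the twisted $L_\infty$-algebra $\big(C^*(V,\g),l_1^{T},l_2^{T},l_3^{T}\big)$ of Theorem \ref{thm:deformation}: an order $n$ deformation is a solution of the Maurer--Cartan equation modulo $t^{n+1}$, the degree $n+1$ coefficient of the Maurer--Cartan equation is (up to sign) $\Ob$ and is automatically $l_1^{T}$-closed, and the deformation extends iff that coefficient is $l_1^{T}$-exact, which by Theorem \ref{thm:=L5} means $[\Ob]=0$ in $\huaH^3(T)$.
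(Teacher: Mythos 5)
Your proof is correct and takes essentially the same route as the paper: isolate the coefficient of $t^{n+1}$, recognize it as $\Ob$ plus the $\dM^{\mathsf{T}}$-image of $\tau_{n+1}$, and combine this with the preceding proposition that $\Ob$ is a $3$-cocycle to get the equivalence with $[\Ob]=0$. (The paper states the resulting identity as $\Ob=\dM^{\mathsf{T}}\tau_{n+1}$ rather than your $\Ob=-\dM^{\mathsf{T}}\tau_{n+1}$; this sign discrepancy is immaterial since one may replace $\tau_{n+1}$ by $-\tau_{n+1}$.)
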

\begin{proof}
Suppose that an order $n$ deformation $T_{t}$ of the embedding tensor $T$ extends to an order $n+1$ deformation, then we have
\begin{equation*}
\sum_{i+j+k=n+1}([\tau_{i}u,\tau_{j}v,\tau_{k}w]_{\g}-\tau_{k}(\rho(\tau_{i}u,\tau_{j}v)w))=0,
\end{equation*}
which implies
\begin{eqnarray*}
&&\sum_{i+j+k=n+1}([\tau_{i}u,\tau_{j}v,\tau_{k}w]_{\g}-\tau_{k}(\rho(\tau_{i}u,\tau_{j}v)w))\\
&=&-\tau_{n+1}(\rho(Tu,Tv)w)+[Tu,Tv,\tau_{n+1}w]_{\g}+[\tau_{n+1}u,Tv,Tw]_{\g}-T(\rho(\tau_{n+1}u,Tv)w)\\
&&+[Tu,\tau_{n+1}v,Tw]_{\g}-T(\rho(Tu,\tau_{n+1}v)w).
\end{eqnarray*}
That is $\Ob=\dM^{\mathsf{T}}\tau_{n+1}$, thus the obstruction  class $[\Ob]$ is trivial.

Conversely, if the obstruction class $[\Ob]$ is trivial, suppose that $\Ob=\dM^{\mathsf{T}}\tau_{n+1}$ for some linear map $\tau_{n+1}\in \Hom(V,\g)$, set $\hat{T}_{t}=T_{t}+\tau_{n+1}t^{n+1}$, then $\hat{T}_{t}$ satisfies \eqref{eq:=order n3} for $0\leq s\leq n+1$. So $\hat{T}_{t}$ is an order $n+1$ deformation which means that $T_{t}$ is extendable.
\end{proof}
\begin{cor}
Let $T$ be an embedding tensor on the $3$-Lie algebra $(\g, [\cdot,\cdot,\cdot]_{\g})$ with respect to the representation $(V ; \rho)$. If $\huaH^{3}(T)=0$, then every $2$-cocycle in $\huaZ^{2}(T)$ is the infinitesimal of some formal deformations of the embedding tensor $T$.
\end{cor}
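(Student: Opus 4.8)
The plan is an induction on the deformation order that feeds on the obstruction theorem just established. Fix a $2$-cocycle $\tau_{1}\in\huaZ^{2}(T)$. I want to construct inductively a sequence $\tau_{1},\tau_{2},\tau_{3},\dots$ in $\Hom_{\mathbb{K}}(V,\g)$ such that $T_{t}=T+\sum_{i\geq 1}\tau_{i}t^{i}$ satisfies \eqref{eq:=FD1}; once this is done, $\tau_{1}$ is by definition the infinitesimal of $T_{t}$, so the corollary follows. In this sense the statement is the converse of Proposition \ref{pro:=FD3}, made available by the vanishing of $\huaH^{3}(T)$.

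For the base of the induction, I would check that $T_{t}=T+\tau_{1}t$ is an order $1$ deformation. Unwinding \eqref{eq:=order n2}, the $s=0$ constraint is just the embedding tensor identity \eqref{ET1} for $T$, and the $s=1$ constraint is exactly the equation displayed immediately before Proposition \ref{pro:=FD3}; comparing it with the characterization of closed $1$-cochains recorded right after \eqref{eq:=cohoET}, this $s=1$ constraint holds precisely because $\dM^{\mathsf{T}}\tau_{1}=0$. Hence $T+\tau_{1}t$ is an order $1$ deformation with infinitesimal $\tau_{1}$.

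For the inductive step, suppose $T_{t}=\sum_{i=0}^{n}\tau_{i}t^{i}$ (with $\tau_{0}=T$) is an order $n$ deformation whose linear term is the given $\tau_{1}$. Form the obstruction cochain $\Ob\in\frak C_{T}^{3}(V;\g)$ by \eqref{eq:=Obt1}. By the proposition above, $\Ob$ is a $3$-cocycle, so it determines a class $[\Ob]\in\huaH^{3}(T)$; since $\huaH^{3}(T)=0$ by hypothesis, this class is trivial, and the extendability theorem above furnishes $\tau_{n+1}\in\Hom_{\mathbb{K}}(V,\g)$ with $\Ob=\dM^{\mathsf{T}}\tau_{n+1}$ such that $\hat{T}_{t}=T_{t}+\tau_{n+1}t^{n+1}$ is an order $n+1$ deformation. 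Since the added term sits in degree $n+1\geq 2$, the linear term of $\hat{T}_{t}$ is still $\tau_{1}$. Iterating from the order $1$ deformation $T+\tau_{1}t$ then produces a coherent family $(\tau_{i})_{i\geq 0}$ with $\tau_{0}=T$, and $T_{t}=\sum_{i\geq 0}\tau_{i}t^{i}$ satisfies \eqref{eq:=FD1} coefficient by coefficient in $t$, i.e.\ it is a formal deformation of $T$ whose infinitesimal is the prescribed $\tau_{1}$.

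I do not anticipate any genuine obstacle: the corollary is a formal consequence of the two preceding results. The only point requiring a word of care is the routine bookkeeping observation that each extension step only adds a term in degree $\geq 2$ and hence leaves all lower-order coefficients — in particular the infinitesimal $\tau_{1}$ — untouched, so that the limiting power series genuinely has $\tau_{1}$ as its linear term.
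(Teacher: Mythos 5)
The paper states this corollary without proof, and your induction is exactly the argument it is implicitly relying on: the base case $T+\tau_{1}t$ is an order $1$ deformation precisely because $\dM^{\mathsf{T}}\tau_{1}=0$ matches the $s=1$ coefficient equation, and the inductive step is a direct application of the extendability theorem together with $\huaH^{3}(T)=0$. Your bookkeeping remark that each extension only adds a term of degree $n+1\geq 2$, so the coefficient equations already verified are untouched and the limiting series is a genuine formal deformation with infinitesimal $\tau_{1}$, is the right (and only) point needing care; the proof is correct.
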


 \end{document}